\newtheorem{theorem}{Theorem}[section]
\newtheorem{corollary}[theorem]{Corollary}
\newtheorem{proposition}[theorem]{Proposition}
\newtheorem{lemma}[theorem]{Lemma}
\theoremstyle{definition}
\newtheorem{definition}[theorem]{Definition}
\newtheorem{example}[theorem]{Example}
\newtheorem{remark}[theorem]{Remark}
\definecolor{blue}{rgb}{0, 0.445, 0.695}
\definecolor{bluishgreen}{rgb}{0, 0.626, 0.456}
\definecolor{red}{rgb}{0.896, 0.395, 0}
\definecolor{purple}{rgb}{0.783, 0.464, 0.640}
\definecolor{skyblue}{rgb}{0.359, 0.752, 0.973}
\definecolor{orange}{rgb}{0.999, 0.706, 0.0}
\definecolor{yellow}{rgb}{0.937, 0.890, 0.258}
\definecolor{olive}{RGB}{116,141,19}
\definecolor{green}{RGB}{108,208,48}
\definecolor{teal}{RGB}{47,77,62}
\definecolor{turquoise}{RGB}{86,235,211}
\definecolor{lightblue}{RGB}{150,178,153}
\definecolor{blue2}{RGB}{25,50,191}
\definecolor{indigo}{RGB}{142,128,251}
\definecolor{indigo2}{RGB}{114,32,246}
\definecolor{lightpurple}{RGB}{243,197,250}
\definecolor{purple2}{RGB}{105,66,131}
\definecolor{magenta}{RGB}{206,43,188}
\definecolor{brown}{RGB}{110,57,13}
\DeclareMathOperator{\alt}{\operatorname{alt}}
\DeclareMathOperator{\conv}{\operatorname{conv}}
\DeclareMathOperator{\area}{\operatorname{area}}
\newcommand{\Tam}[1]{\operatorname{Tam}({#1})}
\newcommand{\altTam}[2]{\operatorname{Tam}_{#1}(#2)}
\newcommand{\TamComplex}[1]{\mathcal{TC}({#1})}
\newcommand{\altTamComplex}[2]{\mathcal{TC}_{#1}(#2)}
\newcommand{\genPerm}[1]{\mathcal{P}_{#1}}
\newcommand{\MixedSubdivision}[1]{\mathcal{M}({#1})}
\newcommand{\Asso}[1]{\operatorname{Asso}_{#1}}
\newcommand{\altAsso}[2]{\operatorname{Asso}_{#1,#2}}
\newcommand{\drot}[1]{\lessdot_{#1}}
\newcommand{\rightflushing}[2]{\varphi_{#1,#2}}
\newcommand{\uu}{\mathbf{u}}
\newcommand{\subpolytope}[1]{S_{#1}} 
\newcommand{\arrangement}{\mathcal{H}^h}
\newcommand{\defn}[1]{{\color{green!50!black}\emph{#1}}}
	\tikzstyle{edge}=[line width=.75pt]
	\tikzstyle{fnode}=[fill=black,draw=black,circle,scale=\s]
	\tikzstyle{pathnode}=[inner sep=.9pt]
	\newcommand{\red}{red!50!white}
	\newcommand{\blue}{blue!50!white}
	\newcommand{\green}{green!50!gray}
	\newcommand{\gray}{white!50!gray}
	\tikzstyle{facet}=[fill=\green,fill opacity=0.6]
	\tikzstyle{facetR}=[fill=\red,fill opacity=0.8]
	\tikzstyle{facetL}=[fill=\blue,fill opacity=0.8]
	\tikzstyle{pathnode}=[inner sep=.9pt]
        \newcounter{pick}
        \newcounter{valley}
        \newcounter{valleyleft}
        \newcounter{height}
        \newcounter{previousheight}
        \newcounter{totalheight}
        \newcounter{parentheight}
        \newcounter{width}
        \newcounter{widthzero}
        \newcounter{relativedistance}
        \newcounter{toremove}
        \newcounter{firstremove}
\newcommand{\nuPath}[7]{
    \begin{tikzpicture}[scale=#3,
                    pathedge/.style={line width=1.3pt}]
		\setcounter{pick}{0}
		\setcounter{valley}{0}
		\setcounter{height}{0}
		\setcounter{previousheight}{0}
        \setcounter{totalheight}{-1}
        
            \foreach \step in {#1}{
                \addtocounter{totalheight}{1}
            }
            \draw[\gray, dashed] (0,0) -- (0,\thetotalheight);
            \foreach \step in {#1}{
                \addtocounter{valley}{\step}
                \draw[\gray, dashed] (0,\theheight) -- (\thevalley,\theheight); 
                    \ifthenelse{\thepick=\thevalley}{}{
                        \pgfmathparse{\thepick+1}
                        \let\respick\pgfmathresult
                        \foreach \x in {\respick,...,\thevalley}{
                            \draw[\gray, dashed] (\x,\theheight) -- (\x,\thetotalheight); 
                        }
                    }
                \addtocounter{pick}{\step}
                \addtocounter{height}{1}
            }

        \setcounter{pick}{0}
		\setcounter{valley}{0}
		\setcounter{height}{0}
        \foreach \step in {#2}{
            \addtocounter{valley}{\step}
            \draw[pathedge] (\thepick,\thepreviousheight) -- (\thepick,\theheight) -- (\thevalley,\theheight);
            \addtocounter{pick}{\step}
            \setcounter{previousheight}{\theheight}
            \addtocounter{height}{1}
        }

 		\foreach \a/\b/\c/\d in {#4}{
			\ifthenelse{\equal{\c}{}}{}{
				\draw[\c,line width=.5pt](\a,\b) circle(.25);
			}
			\fill[\d,line width=.5pt](\a,\b) circle(.15);
 		}
		\foreach \a/\b/\c/\d/\e in {#5}{
			\draw[\d](\a,\b) node[scale=.9,anchor=\e]{\c};
		}
        #7
    \end{tikzpicture}
}
\newcommand{\nuTree}[7]{
    \begin{tikzpicture}[scale=#3,
            fillnode/.style={fill=black,draw=black,circle,scale=1},
            circlenode/.style={draw=black,circle,scale=1},
            treeedge/.style={line width=0.9pt,black},
    ]
		\setcounter{pick}{0}
		\setcounter{valley}{0}
		\setcounter{height}{0}
		\setcounter{previousheight}{0}
        \setcounter{totalheight}{-1}

        #7
        
            \foreach \step in {#1}{
                \addtocounter{totalheight}{1}
            }
            \draw[\gray, dashed] (0,0) -- (0,\thetotalheight);
            \foreach \step in {#1}{
                \addtocounter{valley}{\step}
                \draw[\gray, dashed] (0,\theheight) -- (\thevalley,\theheight); 
                    \ifthenelse{\thepick=\thevalley}{}{
                        \pgfmathparse{\thepick+1}
                        \let\respick\pgfmathresult
                        \foreach \x in {\respick,...,\thevalley}{
                            \draw[\gray, dashed] (\x,\theheight) -- (\x,\thetotalheight); 
                        }
                    }
                \addtocounter{pick}{\step}
                \addtocounter{height}{1}
            }

		\setcounter{height}{0}
  		\setcounter{width}{0}

        \edef\mynu{{#1}}
        \edef\mymu{{#2}}

        
            \pgfmathsetmacro{\stepnu}{\mynu[0]}
            \pgfmathsetmacro{\stepmu}{\mymu[0]}
            \addtocounter{width}{\stepnu}

            \foreach \x in {0,...,\stepmu}{
                \node[fillnode,scale=#3] at (\thewidth-\x,\theheight) {};
            }
            
            \draw[treeedge] (\thewidth-\stepmu,\theheight) -- (\thewidth,\theheight);

            \ifthenelse{0=\thetotalheight}{}{
            \setcounter{parentheight}{\theheight}
            \setcounter{relativedistance}{0}
            \foreach \k in {0,...,\thetotalheight}{
                \addtocounter{parentheight}{1}
                
                \pgfmathsetmacro{\substepnuabove}{\mynu[\k+1]}
                \pgfmathsetmacro{\substepmuabove}{\mymu[\k+1]}
                \addtocounter{relativedistance}{\substepnuabove-\substepmuabove}
                
                \ifthenelse{\therelativedistance>0}{}{\breakforeach} 
            }            
            \draw[treeedge] (\thewidth-\stepmu,\theheight) -- (\thewidth-\stepmu,\theparentheight);
            }

            \addtocounter{height}{1}

        \foreach \j in {1,...,\thetotalheight}{

            \pgfmathsetmacro{\stepnu}{\mynu[\j]}
            \pgfmathsetmacro{\stepmu}{\mymu[\j]}
            \addtocounter{width}{\stepnu}

            \foreach \x in {0,...,\stepmu}{
                \setcounter{relativedistance}{0}
                \setcounter{toremove}{\x}
                
                \foreach \k in {\j,...,1}{
                    \pgfmathsetmacro{\substepnu}{\mynu[\k]}
                    \pgfmathsetmacro{\substepmu}{\mymu[\k]}
                    \pgfmathsetmacro{\substepmubelow}{\mymu[\k-1]}
                    
                    \ifthenelse{\k=\j}
                        {\addtocounter{relativedistance}{\x-\substepnu}}
                        {\addtocounter{relativedistance}{\substepmu-\substepnu}}
    
                    \ifthenelse{\therelativedistance<0}
                        {\breakforeach}
                        {\addtocounter{toremove}{\substepmubelow}}
                }

                \ifthenelse{\x=0}
                {\setcounter{firstremove}{\thetoremove}}
                {}
                
                \node[fillnode,scale=#3] at (\thewidth-\thetoremove,\theheight) {};
            }

            \draw[treeedge] (\thewidth-\thetoremove,\theheight) -- (\thewidth-\thefirstremove,\theheight);
            
            \ifthenelse{\j=\thetotalheight}{}{
            \setcounter{parentheight}{\theheight}
            \setcounter{relativedistance}{0}
            \foreach \k in {\j,...,\thetotalheight}{
                \addtocounter{parentheight}{1}

                \pgfmathsetmacro{\substepnuabove}{\mynu[\k+1]}
                \pgfmathsetmacro{\substepmuabove}{\mymu[\k+1]}
                \addtocounter{relativedistance}{\substepnuabove-\substepmuabove}

                \ifthenelse{\therelativedistance>0}{}{\breakforeach} 
            }            
            \draw[treeedge] (\thewidth-\thetoremove,\theheight) -- (\thewidth-\thetoremove,\theparentheight);
            }
            
            \addtocounter{height}{1}
        }

 		\foreach \a/\b/\c/\d in {#4}{
			\ifthenelse{\equal{\c}{}}{}{
				\draw[\c,line width=.5pt](\a,\b) circle(.25);
			}
			\fill[\d,line width=.5pt](\a,\b) circle(.15);
 		}
		\foreach \a/\b/\c/\d/\e in {#5}{
			\draw[\d](\a,\b) node[scale=.9,anchor=\e]{\c};
		}
    \end{tikzpicture}
}
\newcommand{\altnuTree}[8]{
    \begin{tikzpicture}[scale=#4,
            fillnode/.style={fill=black,draw=black,circle,scale=1},
            circlenode/.style={draw=black,circle,scale=1},
            treeedge/.style={line width=0.9pt,black},
    ]
		\setcounter{valleyleft}{0}
		\setcounter{height}{0}
		\setcounter{previousheight}{0}
        \setcounter{totalheight}{-1}

        #8
        
        \edef\mynu{{#1}}
        \edef\mydelta{{#2}}
        \edef\mymu{{#3}}

            \foreach \step in {#1}{
                \addtocounter{totalheight}{1}
            }

            \foreach \j in {1,...,\thetotalheight}{    
                \pgfmathsetmacro{\stepnu}{\mynu[\j]}
                \pgfmathsetmacro{\stepdelta}{\mydelta[\j-1]}
                \addtocounter{valleyleft}{\stepnu-\stepdelta}
            }
            \setcounter{valley}{\thevalleyleft}
            \pgfmathsetmacro{\stepnuzero}{\mynu[0]}
            \addtocounter{valley}{\stepnuzero}
            \setcounter{widthzero}{\thevalley}

            \draw[\gray, dashed] (\thevalleyleft,\theheight) -- (\thevalley, \theheight);
            \foreach \i in  {\thevalleyleft,...,\thevalley}{
                \draw[\gray, dashed] (\i,\theheight) -- (\i,\theheight+1);
            }
            \addtocounter{height}{1}

            \foreach \j in {1,...,\thetotalheight}{
                \pgfmathsetmacro{\stepnu}{\mynu[\j]}
                \pgfmathsetmacro{\stepdelta}{\mydelta[\j-1]}
                \addtocounter{valleyleft}{\stepdelta-\stepnu}
                \addtocounter{valley}{\stepdelta}

                \ifthenelse{\j=\thetotalheight}{
                    \draw[\gray, dashed] (\thevalleyleft,\theheight) -- (\thevalley, \theheight);
                    }
                    {
                        \draw[\gray, dashed] (\thevalleyleft,\theheight) -- (\thevalley, \theheight);
                        \foreach \i in  {\thevalleyleft,...,\thevalley}{
                            \draw[\gray, dashed] (\i,\theheight) -- (\i,\theheight+1);
                        }
                    }
                    
                \addtocounter{height}{1}
            }

		\setcounter{height}{0}
  		\setcounter{width}{0}

        
            \pgfmathsetmacro{\stepnu}{\mynu[0]}
            \pgfmathsetmacro{\stepmu}{\mymu[0]}
            \addtocounter{width}{\thewidthzero} 

            \foreach \x in {0,...,\stepmu}{
                \node[fillnode,scale=#4] at (\thewidth-\x,\theheight) {};
            }
            
            \draw[treeedge] (\thewidth-\stepmu,\theheight) -- (\thewidth,\theheight);

            \ifthenelse{0=\thetotalheight}{}{
            \setcounter{parentheight}{\theheight}
            \setcounter{relativedistance}{0}
            \foreach \k in {0,...,\thetotalheight}{
                \addtocounter{parentheight}{1}
                
                \pgfmathsetmacro{\substepnuabove}{\mydelta[\k]} 
                \pgfmathsetmacro{\substepmuabove}{\mymu[\k+1]}
                \addtocounter{relativedistance}{\substepnuabove-\substepmuabove}
                
                \ifthenelse{\therelativedistance>0}{}{\breakforeach} 
            }            
            \draw[treeedge] (\thewidth-\stepmu,\theheight) -- (\thewidth-\stepmu,\theparentheight);
            }

            \addtocounter{height}{1}

        \foreach \j in {1,...,\thetotalheight}{

            \pgfmathsetmacro{\stepnu}{\mynu[\j]}
            \pgfmathsetmacro{\stepmu}{\mymu[\j]}
            \pgfmathsetmacro{\stepdelta}{\mydelta[\j-1]}
            \addtocounter{width}{\stepdelta} 

            \foreach \x in {0,...,\stepmu}{
                \setcounter{relativedistance}{0}
                \setcounter{toremove}{\x}
                
                \foreach \k in {\j,...,1}{
                    \pgfmathsetmacro{\substepnu}{\mydelta[\k-1]} 
                    \pgfmathsetmacro{\substepmu}{\mymu[\k]}
                    \pgfmathsetmacro{\substepmubelow}{\mymu[\k-1]}
                    
                    \ifthenelse{\k=\j}
                        {\addtocounter{relativedistance}{\x-\substepnu}}
                        {\addtocounter{relativedistance}{\substepmu-\substepnu}}
    
                    \ifthenelse{\therelativedistance<0}
                        {\breakforeach}
                        {\addtocounter{toremove}{\substepmubelow}}
                }

                \ifthenelse{\x=0}
                {\setcounter{firstremove}{\thetoremove}}
                {}
                
                \node[fillnode,scale=#4] at (\thewidth-\thetoremove,\theheight) {};
            }

            \draw[treeedge] (\thewidth-\thetoremove,\theheight) -- (\thewidth-\thefirstremove,\theheight);
            
            \ifthenelse{\j=\thetotalheight}{}{
            \setcounter{parentheight}{\theheight}
            \setcounter{relativedistance}{0}
            \foreach \k in {\j,...,\thetotalheight}{
                \addtocounter{parentheight}{1}

                \pgfmathsetmacro{\substepnuabove}{\mydelta[\k]} 
                \pgfmathsetmacro{\substepmuabove}{\mymu[\k+1]}
                \addtocounter{relativedistance}{\substepnuabove-\substepmuabove}

                \ifthenelse{\therelativedistance>0}{}{\breakforeach} 
            }            
            \draw[treeedge] (\thewidth-\thetoremove,\theheight) -- (\thewidth-\thetoremove,\theparentheight);
            }
            
            \addtocounter{height}{1}
        }

 		\foreach \a/\b/\c/\d in {#5}{
			\ifthenelse{\equal{\c}{}}{}{
				\draw[\c,line width=.5pt](\a,\b) circle(.25);
			}
			\fill[\d,line width=.5pt](\a,\b) circle(.15);
 		}
		\foreach \a/\b/\c/\d/\e in {#6}{
			\draw[\d](\a,\b) node[scale=.9,anchor=\e]{\c};
		}
    \end{tikzpicture}
}
\newcommand{\altnuGrid}[8]{
    \begin{tikzpicture}[scale=#4,
            fillnode/.style={fill=black,draw=black,circle,scale=1},
            circlenode/.style={draw=black,circle,scale=1},
            treeedge/.style={line width=0.9pt,black},
    ]
		\setcounter{valleyleft}{0}
		\setcounter{height}{0}
		\setcounter{previousheight}{0}
        \setcounter{totalheight}{-1}

        #8
        
        \edef\mynu{{#1}}
        \edef\mydelta{{#2}}

            \foreach \step in {#1}{
                \addtocounter{totalheight}{1}
            }

            \foreach \j in {1,...,\thetotalheight}{    
                \pgfmathsetmacro{\stepnu}{\mynu[\j]}
                \pgfmathsetmacro{\stepdelta}{\mydelta[\j-1]}
                \addtocounter{valleyleft}{\stepnu-\stepdelta}
            }
            \setcounter{valley}{\thevalleyleft}
            \pgfmathsetmacro{\stepnuzero}{\mynu[0]}
            \addtocounter{valley}{\stepnuzero}
            \setcounter{widthzero}{\thevalley}

            \draw[\gray, dashed] (\thevalleyleft,\theheight) -- (\thevalley, \theheight);
            \foreach \i in  {\thevalleyleft,...,\thevalley}{
                \draw[\gray, dashed] (\i,\theheight) -- (\i,\theheight+1);
            }
            \addtocounter{height}{1}

            \foreach \j in {1,...,\thetotalheight}{
                \pgfmathsetmacro{\stepnu}{\mynu[\j]}
                \pgfmathsetmacro{\stepdelta}{\mydelta[\j-1]}
                \addtocounter{valleyleft}{\stepdelta-\stepnu}
                \addtocounter{valley}{\stepdelta}

                \ifthenelse{\j=\thetotalheight}{
                    \draw[\gray, dashed] (\thevalleyleft,\theheight) -- (\thevalley, \theheight);
                    }
                    {
                        \draw[\gray, dashed] (\thevalleyleft,\theheight) -- (\thevalley, \theheight);
                        \foreach \i in  {\thevalleyleft,...,\thevalley}{
                            \draw[\gray, dashed] (\i,\theheight) -- (\i,\theheight+1);
                        }
                    }
                    
                \addtocounter{height}{1}
            }

 		\foreach \a/\b/\c/\d in {#5}{
			\ifthenelse{\equal{\c}{}}{}{
				\draw[\c,line width=.5pt](\a,\b) circle(.25);
			}
			\fill[\d,line width=.5pt](\a,\b) circle(.15);
 		}
		\foreach \a/\b/\c/\d/\e in {#6}{
			\draw[\d](\a,\b) node[scale=.9,anchor=\e]{\c};
		}
    \end{tikzpicture}
}
\title[A canonical realization of the alt $\nu$-associahedron]{A canonical realization of the \\ alt $\nu$-associahedron}
\author{Cesar Ceballos}
\address{TU Graz, Institut f\"ur Geometrie, Kopernikusgasse 24, 8010 Graz, Austria.}
\email{cesar.ceballos@tugraz.at}\thanks{The author was supported by the Austrian Science Fund FWF, grants P 33278 and I 5788.}
\date{\today}
\subjclass[2020]{52B11, 14T90, 06A07, 06B05}
\begin{document}

\begin{abstract}
    Given a lattice path $\nu$, the alt $\nu$-Tamari lattice is a partial order recently introduced by Ceballos and Chenevière, which generalizes the $\nu$-Tamari lattice and the $\nu$-Dyck lattice. All these posets are defined on the set of lattice paths that lie weakly above $\nu$, and posses a rich combinatorial structure. In this paper, we study the geometric structure of these posets. 
    We show that their Hasse diagram is the edge graph of a polytopal complex induced by a tropical hyperplane arrangement, which we call the alt $\nu$-associahedron. This generalizes the realization of $\nu$-associahedra by Ceballos, Padrol and Sarmiento. Our approach leads to an elegant construction, in terms of areas below lattice paths, which we call the canonical realization.
    Surprisingly, in the case of the classical associahedron, our canonical realization magically recovers Loday's ubiquitous realization, via a simple affine transformation.  
\end{abstract}

\maketitle

\tableofcontents

\section{Introduction}\label{sec_intro}

The $n$-dimensional associahedron is a mythical polytope which has been extensively studied in the literature~\cite{tamari_festschrift}. 
Its vertices correspond to plane binary trees with $n+1$ internal nodes, and two trees are connected by an edge if they are related by a tree rotation. 
The associahedron appeared for the first time in Dov Tamari's doctoral thesis in~1951, where illustrations of the associahedron up to dimension~3 are shown. 
However, general geometric realizations of the associahedron in higher dimensions were discovered only until the mid and late~1980's, via explicit constructions by Mark Haiman~\cite{haiman_associhedron_1984} and Carl Lee~\cite{lee_associhedron_1989}. 
Since then, many different constructions methods emerged. We refer to~\cite{ceballos_many_2015} for more on the history of some constructions. 

One of the most ubiquitous constructions of the associahedron is the one commonly known as Loday's associahedron~\cite{loday_realizations_2004}. 
The history of this construction goes back to Steven Shnider and Shlomo Sternberg~\cite{shnider_from_1993}, who described it in terms of explicit defining inequalities. 
Then, Jean-Louis Loday~\cite{loday_realizations_2004} described how to obtain nice vertex coordinates of this associahedron using combinatorics of binary trees.
Loday's coordinate description is very elegant and simple: Given a plane binary tree $T$, label each internal node by the number of leaves on its left times the number of leaves on its right. The coordinate $L(T)=(\ell_1,\dots,\ell_{n+1})$ is obtained by reading the labels of the $n+1$ internal nodes in \emph{in-order}\footnote{Recursively read the labels in in-order of the left descendant tree, then the label of the root, and then the labels of the right descendant tree.}. 
Figure~\ref{fig_coordinates_Loday_3D}, shows the coordinates $L(T)$ of two plane binary trees with 4 internal nodes. Taking the convex hull of the points $L(T)$ over all plane binary trees with $n+1$ internal nodes gives a geometric realization of the $n$-dimensional associahedron embedded in $\mathbb{R}^{n+1}$.
We refer to this realization as \defn{Loday's associahedron}. 
The 3-dimensional case is illustrated in~\Cref{fig_associahedron_Loday_3D}, where the vertices are labeled by their coordinates in $\mathbb{R}^4$, omitting parenthesis and commas for simplicity.
We refer to~\cite{pilaud_celebrating_2023} for a nice survey celebrating Loday's associahedron and  its far reaching influence in combinatorics, discrete geometry and algebra.

\begin{figure}[htb]
    \centering
    \input{figures/coordinates_Loday_3D}
    \caption{Example of Loday's coordinates of two plane binary trees.}
    \label{fig_coordinates_Loday_3D}
\end{figure}

\begin{figure}[htb]
    \centering
    \input{figures/associahedron_Loday_3D}
    \caption{Loday's 3-dimensional associahedron.}
    \label{fig_associahedron_Loday_3D}
\end{figure}

In this work, we unexpectedly discovered a new appearance of Loday's associahedron (via a simple affine transformation) in a much wider generality. Before going into details about the general framework, we would like to explain our construction in the special case of the classical associahedron to highlight the beauty of both constructions and their relation.

Each plane binary tree $T$ with $n+1$ internal nodes can be drawn uniquely on a staircase of size~$n+1$, as illustrated in~\Cref{fig_coordinates_canonical_3D}. More precisely, the nodes of the tree $T$ (internal nodes and leaves), are lattice points weakly above the path $(NE)^{n+1}$, where N stands for a north step and E for an east step. The root of the tree is always located at the top left corner, and every other node is connected by an edge to the node directly to its left if any, and to the node directly above it if any. The leaves of the tree are located on the diagonal of the staircase. This way of drawing plane binary trees is quite convenient and extremely useful.     

Our next goal is to describe another elegant way to provide coordinates to plane binary trees, providing another beautiful realization of the associahedron. 

Let $T$ be a plane binary tree with $n+1$ internal nodes drawn on a stair case of size $n+1$. Let $T_i$ be the unique path connecting the left most node at height $i$ to the root in the tree $T$, for $i=1,\dots,n$. The coordinate $C(T)=(c_n,\dots,c_1)$ of the tree~$T$ is defined by 
\begin{align}
    c_i = \area(T_i),
\end{align}
where $\area(T_i)$ is the number of boxes below the path $T_i$. See~\Cref{fig_coordinates_canonical_3D} for an illustration.
The convex hull of the points $C(T)=(c_n,\dots,c_1)$, over all plane binary trees with $n+1$ internal nodes, is a  realization of the $n$-dimensional associahedron which we call the \defn{canonical realization}.  
The 3-dimensional case is illustrated in~\Cref{fig_associahedron_canonical_3D}, where the vertices are labeled by their coordinates in $\mathbb{R}^3$, omitting parenthesis and commas for simplicity. 

\begin{figure}[htb]
    \centering
    \input{figures/coordinates_canonical_3D}
    \caption{Example of coordinates of two plane binary trees in our canonical realization.}
    \label{fig_coordinates_canonical_3D}
\end{figure}

\begin{figure}[htb]
    \centering
    \input{figures/associahedron_canonical_3D}
    \caption{Our canonical realization of the 3-dimensional associahedron.}
    \label{fig_associahedron_canonical_3D}
\end{figure}

Magically, Loday's associahedron and our canonical realization of the associahedron are related by the simple affine transformation
\begin{align}
  \varphi:\mathbb{R}^{n+1}&\to \mathbb{R}^n \\
  \varphi(\ell_1,\dots,\ell_{n+1})&=(c_n,\dots,c_1)
\end{align}
defined by
\begin{align}
    c_i=(\ell_1+\dots+\ell_i) - (1+\dots+i).
\end{align}
For instance, 
\begin{align*}
    \varphi(4,1,4,1) &= (3,2,3) \\
    \varphi(2,1,6,1) &= (3,0,1). 
\end{align*}
That is, the Loday coordinates $L(T)$ of the two trees $T$ in~\Cref{fig_coordinates_Loday_3D} are mapped to our canonical coordinates $C(T)$ for the same trees computed in~\Cref{fig_coordinates_canonical_3D}.

Our description of the canonical realization of the associahedron is just an example of a more general construction of alt $\nu$-associahedra, which is the main contribution of this paper. Our construction concerns generalizations of the classical Tamari lattice, which have been developed mostly because of their importance in representation theory and generalizations of the theory of diagonal harmonics. 

The story starts with Francois Bergeron's introduction of the $m$-Tamari lattice as a partial order on $m$-Dyck paths~\cite{bergeron_higher_2012}, motivated by conjectural connections between the number of intervals in the classical Tamari lattice~\cite{chapoton_tamari_intervals_2005} and the dimension of the alternating component of an $\mathfrak S_n$-module in the study of trivariate diagonal harmonics~\cite{haiman_conjectures_1994}. 
These lattices have remarkable enumerative properties and connections to representation theory, as shown in~\cite{bousquet_representation_2013,bousquet_number_2011}. 
Motivated by all these connections, 
Pr\'eville-Ratelle and Viennot~\cite{preville_nu_tamari_2017} introduced a generalization called the $\nu$-Tamari lattice, indexed by a lattice path $\nu$. 
A further generalization of the $\nu$-Tamari lattice, known as the alt $\nu$-Tamari lattice was recently introduced in~\cite{ceballos_cheneviere_linear_2024}.

The $\nu$-Tamari lattice turned out to have a very rich underlying geometric structure. Its Hasse diagram can be realized as the edge graph of a polytopal complex called the $\nu$-associahedron~\cite{ceballos_geometry_2019}.  
In this paper, we present the first known geometric realizations of the alt $\nu$-Tamari lattice as the edge graph of a polytopal complex that we call the alt $\nu$-associahedron. Our approach generalizes the results in~\cite{ceballos_geometry_2019}, and is based on techniques from tropical geometry.  

One of our main results is the following generalization of~\cite[Theorem~1.1]{ceballos_geometry_2019} for alt $\nu$-Tamari lattices. We refer to later sections for explanation on the terminology.  
\Cref{fig_nuTamari_nDyck_ENEENN,fig_alt_nu_tamari_ENEENN_delta_one} illustrate two examples, where the vertices are labelled by $\nu$-Dyck paths (see~\Cref{sec_altnuTamari}). Further 3D examples are presented in~\Cref{sec_3D_examples}.


\begin{theorem}[{Theorem~\ref{thm_noncrossing_triangulation}, Corollary~\ref{cor_fine_mixed_subdivision}, and Definition~\ref{def_U_associhedron}/Theorem~\ref{def_thm_U_associahedron}}]
Let $\nu$ be a lattice path from $(0,0)$ to (a,b), and $\delta$ be an increment vector with respect to $\nu$.
The Hasse diagram of the alt $\nu$-Tamari lattice $\altTam{\nu}{\delta}$ can be realized geometrically as:
\begin{enumerate}
    \item the dual of a regular triangulation of a subpolytope of the product of two simplices $\Delta_a\times \Delta_b$;
    \item the dual of a coherent fine mixed subdivision of a generalized permutahedron (in $\mathbb R^a$ and in~$\mathbb R^b$);
    \item the edge graph of a polyhedral complex induced by an arrangement of tropical hyperplanes (in $\mathbb{TP}^a \cong \mathbb{R}^a$ and in $\mathbb{TP}^b \cong \mathbb{R}^b$). 
\end{enumerate}
\end{theorem}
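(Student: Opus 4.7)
The plan is to extend the Cayley-trick / tropical-duality strategy of Ceballos--Padrol--Sarmiento for the $\nu$-associahedron to the alt setting, using a height function built from the increment vector $\delta$. The three parts of the statement are naturally linked: once part~(1) is established, part~(2) follows from the Cayley trick applied to the standard correspondence between triangulations of $\Delta_a\times \Delta_b$ and fine mixed subdivisions of a Minkowski sum of simplices, and part~(3) follows from the well-known duality between fine mixed subdivisions of sums of simplices and tropical hyperplane arrangements. The bulk of the work therefore concentrates on producing the right regular triangulation of $\subpolytope{\nu}\subseteq \Delta_a\times \Delta_b$.

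The first step is to identify the relevant subpolytope $\subpolytope{\nu}$: its vertices are the pairs $(e_i,e_j)$ corresponding to the lattice points weakly above $\nu$. I would then construct an explicit height function $h_\delta$ on these vertices so that the induced regular subdivision is a triangulation whose maximal simplices are in bijection with $(\nu,\delta)$-trees. A natural candidate is built from the area statistic $\area$ along lattice paths, modified by the parameters of $\delta$ in such a way that for $\delta=0$ one recovers the lifting of~\cite{ceballos_geometry_2019}. The point is that lifting the vertex $(i,j)$ by the appropriate affine combination of $\delta$-weighted areas should make the simplices indexed by $(\nu,\delta)$-trees precisely the lower faces of the lifted configuration.

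The technical core is to verify that the $(\nu,\delta)$-tree simplices fit together into a triangulation, that is, to prove the \emph{non-crossing} statement referenced as~\ref{thm_noncrossing_triangulation}. This decomposes into three tasks: (a) show pairwise compatibility via a non-crossing condition on the edges of $(\nu,\delta)$-trees, (b) verify that each $(\nu,\delta)$-tree spans a full-dimensional simplex of the correct volume, and (c) conclude by a volume-count or inductive flip argument that the resulting complex covers $\subpolytope{\nu}$. Having secured the triangulation and its regularity via $h_\delta$, the Cayley trick immediately produces the coherent fine mixed subdivision of part~(2), and tropical dualization yields the hyperplane arrangement of part~(3), giving the polyhedral complex $\altTamComplex{\nu}{\delta}$.

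Finally, to see that this polyhedral complex realizes the Hasse diagram of $\altTam{\nu}{\delta}$, I would identify the alt $\nu$-Tamari cover relations of~\cite{ceballos_cheneviere_linear_2023} with the bistellar flips across interior codimension-one faces of the triangulation, and check that the orientation induced by a generic linear functional (or by $h_\delta$ itself) matches the partial order. The main obstacle I anticipate is step~(a) of the non-crossing argument: the extra freedom introduced by $\delta$ could a priori create incompatible pairs of $(\nu,\delta)$-trees whose simplices overlap in unexpected ways, so ruling this out likely requires a careful local analysis of how a rotation affects the combinatorics of the edges and their $\delta$-weights, and a matching computation showing that the two adjacent simplices sit on opposite sides of a shared wall.
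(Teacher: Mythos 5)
Your overall architecture — regular triangulation of a subpolytope of $\Delta_a\times\Delta_b$, then the Cayley trick for part (2), then Develin--Sturmfels/Fink--Rinc\'on tropical duality for part (3) — is exactly the paper's. The gap is in the core step, where you place the dependence on $\delta$ in the wrong object. You fix the subpolytope $\subpolytope{\nu}$ spanned by the lattice points weakly above $\nu$ and hope to encode $\delta$ in a bespoke height function $h_\delta$. The paper shows (Theorem~\ref{thm_noncrossing_triangulation}(2)) that for a \emph{fixed} point configuration, \emph{every} height function satisfying the relevant Monge-type ``non-crossing'' inequality $h(i,j)+h(i',j')>h(i,j')+h(i',j)$ induces one and the same triangulation, namely the one dual to the $\nu$-Tamari lattice; so no natural $\delta$-deformation of the lifting on that configuration will produce the alt $\nu$-Tamari complex for $\delta\neq\delta^{\max}$, and you are left searching for a height function with no structural reason to exist. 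The missing idea is that $\delta$ changes the \emph{point configuration}, not the lifting: the alt $\nu$-Tamari lattice is the rotation poset of $U_{\delta,\nu}$-trees, where the stack set $U_{\delta,\nu}$ is obtained from the region weakly above $\nu$ by an explicit permutation $\sigma$ of the columns (Remark~\ref{rem_shape_permutation_columns}), and the triangulated polytope is $\subpolytope{U_{\delta,\nu}}=\conv\{(e_i,e_j):(i,j)\in U_{\delta,\nu}\}$ with any fixed non-crossing height, e.g.\ $h(i,j)=ij$. (Equivalently, you could keep $\subpolytope{\nu}$ and pull the canonical height back through $\sigma$ — but without identifying $\sigma$ and the stack set you cannot say which subsets of vertices span the simplices.)

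This misplacement also makes your anticipated ``main obstacle'' harder than it needs to be. Once the simplices are literally the $(\delta,\nu)$-trees inside $U_{\delta,\nu}$, pairwise compatibility of cells follows from the non-crossing inequality applied to each axis-parallel rectangle with corners in $U_{\delta,\nu}$, the covering/counting step follows because $\Delta_a\times\Delta_b$ is unimodular so all triangulations of $\subpolytope{U}$ have the same number of top cells — which is computed on the staircase triangulation after transporting by $\widetilde\sigma$ and matched to $\nu$-paths via the right-flushing bijection — and the ``careful local analysis of rotations versus walls'' reduces to the already-known left-aligned ($\nu$-Tamari) case, since $U$-trees are exactly the $\check U$-trees contained in $U$ for the left-aligned completion $\check U$. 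Finally, no orientation check against a linear functional is needed: the statement only asks for the Hasse diagram as the (undirected) dual/edge graph, which is precisely the identification of codimension-one flips with rotations.
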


\begin{figure}[h]
    \begin{center}
        \input{figures/alt_nu_tam_ENEENN_delta2}
        \input{figures/alt_nu_tam_ENEENN_delta0}
    \end{center}
    \caption{The $\nu$-Tamari lattice and $\nu$-Dyck lattice for $\nu=ENEENN=(1,2,0,0)$. They are the alt~$\nu$-Tamari lattices $\altTam{\nu}{\delta}$ for $\delta=(2,0,0)$ and $\delta=(0,0,0)$, respectively.}
    \label{fig_nuTamari_nDyck_ENEENN}
\end{figure}

\begin{figure}[h]
    \begin{center}
        \input{figures/alt_nu_tam_ENEENN_delta1}
    \end{center}
    \caption{The alt $\nu$-Tamari lattice $\altTam{\nu}{\delta}$ for $\nu=ENEENN=(1,2,0,0)$ and $\delta=(1,0,0)$.}
    \label{fig_alt_nu_tamari_ENEENN_delta_one}
\end{figure}

\section{The alt $\nu$-Tamari lattice}\label{sec_altnuTamari}

    In this section, we recall the definition and some results about the alt $\nu$-Tamari lattices introduced by Ceballos and Chenevi\`ere in~\cite{ceballos_cheneviere_linear_2024}. 
    This family of lattices includes the $\nu$-Tamari lattice~\cite{preville_nu_tamari_2017} and the $\nu$-Dyck lattice as extreme cases.

\subsection{The alt $\nu$-Tamari lattice on paths}
    Let $\nu$ be a lattice path on the plane that starts at the origin and consists of a finite number of east and north unit steps. 
    Throughout the paper, we assume that this path ends at the coordinate~$(a,b)$, where~$a$ is the number of east steps and $b$ is the number of north steps of $\nu$. 
    Sometimes, we represent a lattice path $\nu$ by its sequence of letters~$E$ and $N$ for east and north steps, respectively. Alternatively, we also represent $\nu$ by a sequence of non negative integers~$(\nu_0, \nu_1, \dots, \nu_b)$ where $\nu_i$ is the number of east steps at height~$i$.
    For instance, the path $\nu=ENEENN$ can be represented by the sequence $\nu=(1,2,0,0)$. The vertices in~\Cref{fig_nuTamari_nDyck_ENEENN,fig_alt_nu_tamari_ENEENN_delta_one} are labeled using this convention.
    
    A \defn{$\nu$-path} $\mu$ is a lattice path using north and east steps, with the same endpoints as $\nu$, that is weakly above $\nu$.
    Alternatively, $\mu = (\mu_0, \dots, \mu_b)$ is a $\nu$-path if and only if $\sum_{i = 0}^{j} \mu_i \leq \sum_{i = 0}^{j} \nu_i $ for all $ 0 \leq j \leq b$,
    with equality holding for $j=b$.
     
    Let $\nu = (\nu_0, \dots, \nu_b)$ be a fixed path.
    We say that $\delta = (\delta_1, \dots, \delta_b) \in \mathbb{N}^b$ is an \defn{increment vector} with respect to $\nu$ if $\delta_i \leq \nu_i$ for all $ 1 \leq i \leq b $.	
    The alt $\nu$-Tamari lattice associated to $\delta$ is defined in terms of a notion of~$\delta$-altitude.

    We set the $\delta$-altitude of the initial lattice point of $\mu$ to be equal to zero,
    and declare that the $i$-th north step of $\mu$ increases the $\delta$-altitude by $\delta_i$ and an east step decreases the $\delta$-altitude by $1$.
    We denote by $\alt_\delta(q)$ the \defn{$\delta$-altitude} of a lattice point $q$ of $\mu$. In other words, if $q$ has coordinates $q=(i,j)$ then its $\delta$-altitude is $\alt_\delta(q)=(\delta_1+\dots+\delta_j) - i$. 
    
    Given a valley $p$ of $\mu$ (a point preceeded by an east step and followed by a north step), we let $q$ be the next lattice point of $\mu$ after $p$ such that $ \alt_\delta(q) =  \alt_\delta(p)$, and $\mu_{[p,q]}$ be the subpath of $\mu$ that starts at $p$ and ends at $q$.
    We denote by $\mu'$ the path obtained from $\mu$ by switching $ \mu_{[p,q]} $ with the east step~$E$ that precedes it.
    The \defn{$\delta$-rotation} of~$\mu$ at the valley $ p $ is defined to be $\mu'$, and we write $\mu \drot{\delta}  \mu'$. An example is illustrated in Figure~\ref{fig_delta_rotation_paths}.

    \begin{figure}[htb]
        \centering
        \input{figures/delta_rotation_paths}
        \caption{The $\delta$-rotation of a $\nu$-path for $\nu=(2,2,3,1,0)$ and $\delta=(1,2,0,0)$. Each node is labelled with its $\delta$-altitude.}
        \label{fig_delta_rotation_paths}
    \end{figure}

	\begin{definition}
        Let $\delta$ be an increment vector with respect to $\nu$.
		The \defn{alt $\nu$-Tamari poset} $ \altTam{\nu}{\delta} $ is the reflexive transitive closure of $\delta$-rotations on the set of $\nu$-paths.
	\end{definition}

    Three examples of alt $\nu$-Tamari lattices $\altTam{\nu}{\delta}$ for $\nu=ENEEN=(1,2,0)$ and $\delta=(0,0),(1,0)$ and $(2,0)$ are illustrated in \Cref{fig_altnu_lattices_ENEEN_paths}.

    \begin{figure}[htb]
        \begin{center}
            \input{figures/alt_nu_tam_ENEEN_delta0}
            \input{figures/alt_nu_tam_ENEEN_delta1}
            \input{figures/alt_nu_tam_ENEEN_delta2}
        \end{center}
        \caption{Examples of alt $\nu$-Tamari lattices $\altTam{\nu}{\delta}$ for $\nu=ENEEN=(1,2,0)$. Left: the $\nu$-Dyck lattice, for $\delta=(0,0)$. Middle: the lattice for $\delta=(1,0)$. Right: the $\nu$-Tamari lattice, for $\delta=(2,0)$. 
        }
        \label{fig_altnu_lattices_ENEEN_paths}
    \end{figure}

	\begin{remark} \label{rem:subposet}
        For a fixed path $\nu$, there are two extreme cases for the possible choices of increment vector $\delta$. 
        If $\delta_i = \nu_i$ for all $ 1 \leq i \leq b $, the alt $\nu$-Tamari lattice recovers the $\nu$-Tamari lattice introduced by Pr\'eville-Ratelle and Viennot in~\cite{preville_nu_tamari_2017}. 
        If~$\delta_i = 0$ for all $ 1 \leq i \leq b$, the alt $\nu$-Tamari lattice is the poset on $\nu$-paths whose covering relations correspond to changing a consecutive pair $EN$ by $NE$ (adding a box to the path). In this case, $\mu_1 \leq \mu_2$ if $\mu_1$ is weakly below $\mu_2$. This poset is known as the $\nu$-Dyck lattice.  
	\end{remark}

    The following statement summarizes some of the main results in~\cite{ceballos_cheneviere_linear_2024}.

    \begin{theorem}[{\cite{ceballos_cheneviere_linear_2024}}]\label{thm_cc_altTam_main}
        Let $\delta$ be an increment vector with respect to a lattice path $\nu$.
        \begin{enumerate}
            \item The alt $\nu$-Tamari poset $\altTam{\nu}{\delta}$ is a lattice. 
            \item The covering relations of $\altTam{\nu}{\delta}$ are exactly $\delta$-rotations.
            \item For a fixed $\nu$, all alt $\nu$-Tamari lattices have the same number of linear intervals of any length.\footnote{A linear interval $[P,Q]$ is an interval that is a chain $P=P_0\leq P_1 \leq \dots \leq P_\ell=Q$, and its length is the length $\ell$ of the chain.}
        \end{enumerate}
    \end{theorem}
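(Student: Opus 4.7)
My plan is to prove the three claims in order, with parts~(1) and~(2) tied together through a rank-function argument, and part~(3) requiring a finer bijective analysis.

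For parts~(1) and~(2), the natural first step is to exhibit an integer-valued statistic $r$ on $\nu$-paths that strictly increases by exactly one under every $\delta$-rotation $\mu \drot{\delta} \mu'$. A good candidate should come from counting cells or ``inversions'' measured against the $\delta$-altitude profile: tracking the swap of $\mu_{[p,q]}$ with the preceding east step, with $q$ determined by its $\delta$-altitude matching that of $p$, the right statistic ought to be pinned down by the requirement $r(\mu') - r(\mu) = 1$. Once such an $r$ is in place, the preorder $\altTam{\nu}{\delta}$ becomes automatically antisymmetric and graded by $r$, and no $\delta$-rotation can factor through a shorter chain, which is exactly claim~(2). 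For the lattice structure in~(1), I would then construct meets and joins directly, defining $\mu_1 \wedge \mu_2$ as a canonical $\nu$-path built pointwise from a $\delta$-adjusted ``profile'' of $\mu_1$ and $\mu_2$, and verifying the universal property by a local analysis along $\delta$-rotations. The two extremes $\delta=(\nu_1,\dots,\nu_b)$ and $\delta=(0,\dots,0)$ recover the known meets of the $\nu$-Tamari lattice~\cite{preville_nu_tamari_2017} and the $\nu$-Dyck lattice respectively, which both provides a sanity check and guides the form of the general formula. An alternative route worth trying is to embed each $\altTam{\nu}{\delta}$ as a subposet of the $\nu$-Tamari lattice in a structure-preserving way and transfer the lattice operations.

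Claim~(3) is the subtlest part, and I expect it to be the main obstacle. The statement asserts that, for each length $\ell$, the number of linear intervals of length $\ell$ in $\altTam{\nu}{\delta}$ depends only on $\nu$. My plan is to construct, for any two increment vectors $\delta, \delta'$ differing in a single coordinate by one, an explicit bijection between their linear intervals of length $\ell$. A linear interval is a chain of $\ell$ consecutive covers each of which is forced, so every intermediate element must admit exactly one cover leading to the next step of the chain. This rigidity suggests that a linear interval is determined by a very local pattern near the first valley of the cascade, which can be re-encoded as data attached to $\nu$ alone, independently of $\delta$. The hard part will be verifying that such a local exchange preserves the chain-of-covers property when $\delta$ is perturbed, which requires a delicate analysis of how forced rotation sequences respond to a unit change in a single coordinate of $\delta$. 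Once established for adjacent $\delta, \delta'$, invariance across all choices of increment vector follows by induction on the $\ell^1$-distance between them.
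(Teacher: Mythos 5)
Your central device for parts (1) and (2) --- an integer statistic $r$ with $r(\mu')-r(\mu)=1$ for \emph{every} $\delta$-rotation $\mu\drot{\delta}\mu'$ --- cannot exist. Such an $r$ would make $\altTam{\nu}{\delta}$ a graded poset, but already the classical Tamari lattice (the case $\nu=(EN)^n$ with $\delta$ maximal) is not graded: for $n=3$ the Hasse diagram is a pentagon, with maximal chains of lengths $2$ and $3$ between the same bottom and top elements. The same phenomenon occurs for the intermediate $\delta$ in Figure~\ref{fig_altnu_lattices_ENEEN_paths}. What does exist is a statistic that \emph{strictly} increases under every $\delta$-rotation (e.g.\ the area below the path, since $\mu'$ lies weakly above $\mu$); this salvages antisymmetry, hence that $\altTam{\nu}{\delta}$ is a poset, but it increases by the height of the displaced subpath $\mu_{[p,q]}$, not by $1$, so it yields neither gradedness nor your argument that a $\delta$-rotation cannot factor through a longer chain. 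Part (2) therefore still needs a genuine argument (the cited source obtains it through the $(\delta,\nu)$-tree model recalled in Section~2, where a rotation changes a single node of the tree), and part (1) is left entirely to an unconstructed meet formula whose ``universal property'' verification is precisely the hard content.

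Beyond this, the proposal for part (3) is a plan rather than a proof: the claimed bijection between linear intervals for increment vectors differing in one coordinate is never defined, and the assertion that a linear interval ``is determined by a very local pattern near the first valley'' is exactly what would need to be proved. Note also that this theorem is not proved in the present paper at all --- it is imported from Ceballos--Chenevi\`ere, whose proof runs through the isomorphism $\altTam{\nu}{\delta}\cong\Tam{U_{\delta,\nu}}$ of Theorem~\ref{thm_cc_isomorphic_lattices} and the right-flushing bijection, not through a rank function on paths. As written, your argument for (1)--(2) rests on a false premise and your argument for (3) has no content yet.
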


\subsection{The $U$-Tamari lattice on trees}

    The proof of Theorem~\ref{thm_cc_altTam_main} in~\cite{ceballos_cheneviere_linear_2024} is based on an alternative description of the alt $\nu$-Tamari lattice in terms of certain binary trees which they call $(\delta,\nu)$-trees. These trees can be viewed as maximal collections of lattice points in certain set that respect a compatibility relation that we now recall.   

    We say that a sequence  positive integers $\uu=\{u_0,\dots,u_a\}$ is \defn{unimodular} if 
    \[
    u_0\leq \dots \leq u_k \geq \dots \geq u_a
    \]
    for some value $k$.
    Given a unimodular sequence $\uu$, we denote by $b+1=\max\{u_0,\dots,u_a\}$ the maximum value of the $u_i$s. 
    We label the \defn{positions} of the lattice points of an $a\times b$ rectangle by pairs \defn{$(i,j)$} for $0\leq i \leq a$ and $0\leq j \leq b$, such that the value $i$ increases from left to right and $j$ increases from top to bottom.
    We say that the point at position $(i,j)$ is in \defn{column} $i$ and \defn{row} $j$. 
    We choose this convention because it is convenient for our purposes; note that the position of a lattice point is not given by the usual coordinate system. 
    The \defn{height} of a point at position $(i,j)$ is defined as $b-j$.
    The \defn{stack set} $U$ of a unimodular sequence $\uu$ is the subset of lattice points inside the $a\times b$ rectangle with positions $(i,j)$ for $0\leq i \leq a$ and $0 \leq j \leq u_i-1$. In other words, we stack or line up $u_i$ lattice points in column $i$.\footnote{Our name stack set is motivated by the definition of stack polyominoes in~\cite{jonsson_generalized_2005}, which are obtained by replacing our lattice points by unit boxes.} 
    A stack set~$U$ is called \defn{left-aligned} if its corresponding sequence is weakly decreasing: $u_0\geq \dots \geq u_a$. 
    \Cref{fig_stack_set,fig_left_aligned} illustrate examples of these concepts. 
    
    \begin{figure}[htb]
        \centering
        \input{figures/positions_stackset}
            \caption{Left: lattice points in an $a\times b$ rectangle labeled by their position $(i,j)$, for $a=8$ and $b=3$. 
        Right: the stack set $U$ of the  sequence $\uu=\{1,1,2,2,4,3,3,2,1\}$.}
        \label{fig_stack_set}
    \end{figure}

    \begin{figure}[htb]
        \centering
        \input{figures/left_aligned_set}       
        \caption{The left aligned stack set $U$ for the sequence $\uu=\{4,3,3,2,2,2,1,1,1\}$.}
        \label{fig_left_aligned}
    \end{figure}    

    We say that two points~$p,q\in U$ are \defn{$U$-incompatible} if $p$ is strictly southwest or strictly northeast of~$q$, and all the lattice points in the smallest rectangle containing $p$ and $q$ belong to~$U$.  Otherwise, $p$ and~$q$ are said to be \defn{$U$-compatible}. 
    A \defn{$U$-tree} is a maximal collection of pairwise $U$-compatible elements in $U$.
    We can visualize $U$-trees as classical rooted binary trees by connecting consecutive nodes in the same row or column. 
    The vertex at the top-left corner of $U$ is compatible with everyone else, and is the \defn{root} of every $U$-tree. We often refer to the lattice points in a $U$-tree as~\defn{nodes}.
    An example of a~$U$-tree and the rotation operation that we now define is illustrated in~\Cref{fig_delta_rotation_trees}.

    \begin{figure}[htb]
        \centering
        \input{figures/delta_rotation_trees}
        \caption{The rotation of a $U$-tree.}
        \label{fig_delta_rotation_trees}
    \end{figure}

    Let $T$ be a $U$-tree and $p,r\in T$ be two elements which do not lie in the same row or same column. We denote by $p\square r$ the smallest rectangle containing~$p$ and~$r$, and write~$p\llcorner r$~(resp. $p\urcorner r$) for the lower left corner (resp. upper right corner) of~$p\square r$.  

    Let $p,q,r\in T$ be such that $q=p\llcorner r$ and no other elements besides $p,q,r$ lie in~$p\square r$. 
    The \defn{rotation} of~$T$ at $q$ is defined as the set $T'=\bigl(T\setminus\{q\})\cup\{q'\}$, where~\mbox{$q'=p\urcorner r$}.  
    As shown in~\cite{ceballos_cheneviere_linear_2024} (cf.~\cite[Lemma~2.10]{ceballos_nu_trees_2020}), the rotation of a $U$-tree is also a $U$-tree. 
    
    \begin{definition}
    Let $U$ be the stack set of a unimodular sequence $\uu=\{u_0,\dots,u_a\}$.
    The \defn{$U$-Tamari poset} $\Tam{U}$ is the reflexive transitive closure of rotations on $U$-trees. We also refer to this poset as the \defn{rotation poset of~$U$-trees}.  
    \end{definition}

    Three examples of the rotation poset of~$U$-trees for the stack sets $U$ corresponding to the unimodular sequences $\uu=\{2,2,3,3\}$, $\uu=\{2,3,3,2\}$ and $\uu=\{3,3,2,2\}$ are illustrated in \Cref{fig_altnu_lattices_ENEEN_trees}.

    \begin{figure}[htb]
        \begin{center}
            \input{figures/alt_nu_tam_ENEEN_delta0_trees}
            \input{figures/alt_nu_tam_ENEEN_delta1_trees}
            \input{figures/alt_nu_tam_ENEEN_delta2_trees}
        \end{center}
        \caption{Examples of the rotation poset of~$U$-trees for the stack sets $U$ with unimodular sequences $\uu=\{2,2,3,3\}$, $\uu=\{2,3,3,2\}$ and $\uu=\{3,3,2,2\}$ 
        }
        \label{fig_altnu_lattices_ENEEN_trees}
    \end{figure}    

    In~\cite{ceballos_cheneviere_linear_2024}, Ceballos and Chenevière showed that the alt $\nu$-Tamari lattice $\altTam{\nu}{\delta}$ can be described as a rotation poset of $U$-trees for some specific stack set $U$. Our purpose now is to recall the construction of the set $U$ in terms of $\delta$ and $\nu$.  

    \begin{definition}
    Let $\delta$ be and increment vector with respect to $\nu$ (i.e. $\delta_i\leq \nu_i$). We define the \defn{stack set~$U_{\delta,\nu}$} as the collection of lattice points that are weakly above the path
    \begin{align}\label{path_of_delta_nu}
    (E^{\nu_b-\delta_b} S \dots E^{\nu_1-\delta_1} S) \quad 
    E^{\nu_0} \quad 
    (N E^{\delta_1}  \dots N E^{\delta_b})        
    \end{align}
    in the smallest rectangle containing it. The letters $E,S,N$ stand for east, south and north steps, respectively. 
    An $U_{\delta,\nu}$-tree is called a \defn{$(\delta,\nu)$-tree}.      
    \end{definition}

    \begin{example}\label{ex_stackset_from_delta}
        Let $\nu=NEENEEENEEE=(0,2,3,3)$. The stack set $U_{\delta^{\max},\nu}$ for $\delta^{\max}=(2,3,3)$ is the left aligned stack set illustrated in~\Cref{fig_left_aligned}. 
        The stack sets $U=U_{\delta,\nu}$ for $\delta=(2,1,1)$ and $U'=U_{\delta',\nu}$ for $\delta'=(1,2,2)$ are shown on the left and right of~\Cref{fig_permutation_unimodular_sequence}, respectively.
    \end{example}

    \begin{figure}[htb]
        \centering
        \input{figures/stack_sets}
        \caption{Two examples of stack sets for $\nu=(0,2,3,3)$. Left: the stack set $U=U_{\delta,\nu}$ for $\delta=(2,1,1)$. Right: the stack set $U'=U_{\delta',\nu}$ for $\delta'=(1,2,2)$.}
        \label{fig_permutation_unimodular_sequence}
    \end{figure}
    
    \begin{theorem}[{\cite{ceballos_cheneviere_linear_2024}}]\label{thm_cc_isomorphic_lattices}
        The alt $\nu$-Tamari lattice $\altTam{\nu}{\delta}$ and the 
        rotation poset of $(\delta,\nu)$-trees are isomorphic:
        \[
        \altTam{\nu}{\delta} \cong \Tam{U_{\delta,\nu}}.
        \]
    \end{theorem}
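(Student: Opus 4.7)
The plan is to construct an explicit bijection
\[
\Phi: \{\nu\text{-paths}\} \longrightarrow \{(\delta,\nu)\text{-trees}\}
\]
and verify that $\Phi$ sends each $\delta$-rotation to a single tree rotation. The claimed isomorphism of posets then follows by taking reflexive transitive closures, since both are generated by their cover relations.

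For the construction of $\Phi$, I would adapt a \emph{right-flushing} procedure of the kind used in the special case $\delta=\nu$ in~\cite{ceballos_nu_trees_2020}. Given a $\nu$-path $\mu$, I would assign to each valley of $\mu$ a single node of $U_{\delta,\nu}$ obtained by pushing that valley as far right as possible inside its altitude row; the non-valley portions of $\mu$ would contribute a canonical set of boundary nodes common to every $(\delta,\nu)$-tree. The $\delta$-altitude $\alt_\delta$ plays the role of a row index, and the shape of $U_{\delta,\nu}$ dictated by the path~\eqref{path_of_delta_nu} is precisely what is needed so that every valley of $\mu$ has a unique legal target. A cardinality check would give exactly $a+b+1$ selected nodes, matching the expected size of a $(\delta,\nu)$-tree.

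I would next check that $\Phi(\mu)$ is a genuine $(\delta,\nu)$-tree. For compatibility, if two selected nodes were $U_{\delta,\nu}$-incompatible, the whole rectangle between them would lie inside $U_{\delta,\nu}$; translating back to $\mu$ via the construction would force an altitude configuration forbidden by the right-flushing rule. Maximality then follows from the cardinality count, together with the fact that a maximal $U_{\delta,\nu}$-compatible set also has $a+b+1$ elements.

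The main obstacle is rotation equivariance, which I would verify locally. For a $\delta$-rotation $\mu \drot{\delta} \mu'$ at valley $p$, let $q$ be the first subsequent lattice point of $\mu$ with $\alt_\delta(q) = \alt_\delta(p)$, so that $\mu'$ is obtained by swapping $\mu_{[p,q]}$ with the preceding east step. I would identify the node of $\Phi(\mu)$ corresponding to $p$ as being of the form $p' \llcorner r'$ for two specific other nodes $p',r'\in\Phi(\mu)$, and show that the rectangle $p' \square r'$ contains no further nodes of $\Phi(\mu)$. The key computation is that the altitude equation $\alt_\delta(q)=\alt_\delta(p)$, together with the minimality of $q$, translates exactly into this geometric emptiness condition. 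Once this correspondence is in place, a direct comparison shows that $\Phi(\mu')$ is obtained from $\Phi(\mu)$ by replacing $p'\llcorner r'$ with $p'\urcorner r'$, i.e.~by performing the tree rotation at that node. The delicate point is to verify that this swap affects \emph{only} the single node associated to $p$, with every other node of $\Phi(\mu)$ left unchanged; this requires a careful case analysis of how the horizontal runs of $\mu$ and $\mu'$ compare inside and outside the interval $[p,q]$.
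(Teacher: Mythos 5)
Your plan follows essentially the same route as the paper, which obtains this theorem via the right-flushing bijection $\rightflushing{\delta}{\nu}$ of Proposition~\ref{prop_right_flushing_bijection}: a bijection from $\nu$-paths to $(\delta,\nu)$-trees that sends $\delta$-rotations to tree rotations, so the poset isomorphism follows by taking reflexive transitive closures exactly as you say. The only point to adjust is your description of the map: right flushing is defined row by row (placing $\mu_i+1$ nodes at height $i$, flushed to the right while avoiding positions above previously placed nodes that are not leftmost in their row), and the $\delta$-altitude is not itself the row index but rather the invariant that determines which node a valley corresponds to and hence where the rotation occurs.
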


    The alt $\nu$-Tamari lattices in Figure~\ref{fig_altnu_lattices_ENEEN_paths} correspond to the rotation posets of $(\delta,\nu)$-trees in Figure~\ref{fig_altnu_lattices_ENEEN_trees}. 
    See~\Cref{fig_associahedron2D_paths_trees,associahedronENENEN_paths_trees,fig_Fuss_associahedron2D_m2,fig_Fuss_associahedron2D_m3,fig_alt_associahedronEENEEN_delta012} for further examples.

    \begin{remark}\label{rem_shape_vs_pair}
        Note that the number of east steps at height $i$ in the path defined by~\eqref{path_of_delta_nu} is equal to~$\nu_i$ (which is equal to $\nu_i-\delta_i+\delta_i$ for $i\neq 0$ and $\nu_0$ for $i=0$). 
        Therefore, every stack set $U$ corresponds to a unique pair $(\delta,\nu)$, where $\delta$ is an increment vector with respect to some lattice path $\nu$.
        We denote by~\defn{$(\delta_U,\nu_U)$} the unique pair associated to $U$.

        Furthermore, $\nu$ starts at $(0,0)$ and ends at $(a,b)$ if and only if the smallest rectangle containing $U_{\delta,\nu}$ is an $a\times b$ rectangle.        
    \end{remark}

    \begin{remark}\label{rem_shape_permutation_columns}
        If $\delta$ and $\delta'$ are two increment vectors with respect to $\nu$, then the corresponding shapes $U=U_{\delta,\nu}$ and $U'=U_{\delta',\nu}$ can be obtained from each other by some permutation of the columns. More precisely, if~$\uu=\{u_0,\dots ,u_a\}$ and~\mbox{$\uu'=\{u_0',\dots,u_a'\}$} are the unimodular sequences encoding the sizes of the columns of $U$ and $U'$, then~$u_i = u_{\sigma(i)}'$ for the permutation $\sigma$ of $\{0,\dots , a\}$ that sends the position of the $k$th appearance of a value $\ell$ in $\uu$ to the position of the $k$th appearance of $\ell$ in $\uu'$.

        Following~\Cref{ex_stackset_from_delta}, consider the path $\nu=NEENEEENEEE=(0,2,3,3)$ and the increment vectors $\delta=(2,1,1)$ and $\delta'=(1,2,2)$. The corresponding stack sets $U=U_{\delta,\nu}$ and $U'=U_{\delta',\nu}$, which are illustrated in~\Cref{fig_permutation_unimodular_sequence}, have unimodular sequences 
        \begin{align*}
            \uu=\{u_0,\dots, u_8\}&=\{1,1,2,2,4,3,3,2,1\}, \\
            \uu'=\{u_0',\dots, u_8'\}&=\{1,2,3,4,3,2,2,1,1\}.
        \end{align*}
        
        The permutation $\sigma$ on the set $\{0,\dots,8\}$ that transform one sequence to the other, moving the position of the $k$th appearance of a value $\ell$ in $\uu$ to the position of the $k$th appearance of $\ell$ in $\uu'$ is
        \[
        \begin{array}{cccccccccc}
            &0&1&2&3&4&5&6&7&8 \\
            
            \sigma=&0&7&1&5&3&2&4&6&8.
        \end{array}          
        \]
        
        For instance, $u_0=u_1=u_8=1$ are mapped to $u_0'=u_7'=u_8'=1$, which means that $\sigma(0)=0$, $\sigma(1)=7$ and $\sigma(8)=8$.  
        Similarly, $u_2=u_3=u_7=2$ are mapped to $u_1'=u_5'=u_6'=2$, the next $u_5=u_6=3$ are mapped to $u_2'=u_4'=3$, and $u_4=4$ is mapped to $u_3'=4$.  
    \end{remark}

\subsection{The isomorphism between both lattices}
    The isomorphism giving rise to Theorem~\ref{thm_cc_isomorphic_lattices} is beautiful and simple~\cite{ceballos_cheneviere_linear_2024}. It is a slight modification of the right-flushing bijection described by Ceballos, Padrol, and Sarmiento in~\cite{ceballos_nu_trees_2020}. 
    
    Given a~$\nu$-path~$\mu=(\mu_0,\mu_1,\dots,\mu_a)$, 
    there is a unique $(\delta,\nu)$-tree $T$ that has $\mu_i+1$ nodes at height $i$. 
    It can be constructed via the \defn{right flushing bijection} $\rightflushing{\delta}{\nu}$, which recursively adds  nodes to the tree from right to left, from bottom to top, avoiding forbidden positions. A position is forbidden if it is located above a previously added point which is not the left most node of its row. 
    An example is illustrated in~\Cref{fig_right_flushing}, where the forbidden positions are those belonging to the wiggly lines.

    \begin{figure}[htb]
        \centering
        \input{figures/right_flushing}
        \caption{The right flushing bijection $\rightflushing{\delta}{\nu}$ for $\nu=(2,2,3,1,0)$ and $\delta=(1,2,0,0)$.}
        \label{fig_right_flushing}
    \end{figure}

    \begin{proposition}[{\cite{ceballos_nu_trees_2020,ceballos_cheneviere_linear_2024}}]
    \label{prop_right_flushing_bijection}
        The right flushing $\rightflushing{\delta}{\nu}$ is a bijection between the set of $\nu$-paths and the set of $(\delta,\nu)$-trees. It sends $\delta$-rotations on $\nu$-paths to rotations on~$(\delta,\nu)$-trees. Thus, it is a poset isomorphism.
    \end{proposition}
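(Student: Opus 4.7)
The plan is to prove the three assertions separately: (a) $\rightflushing{\delta}{\nu}$ is well-defined as a map from $\nu$-paths to $(\delta,\nu)$-trees producing a tree with $\mu_i+1$ nodes at height $i$; (b) the map is a bijection; (c) it sends $\delta$-rotations to tree rotations. For (a), I would proceed by a double induction, on height bottom-to-top and, within each row, on position right-to-left. Two things need checking at each stage: that the newly placed node lies in $U_{\delta,\nu}$ and in a non-forbidden position, and that the resulting final configuration is a maximal $U$-compatible set. The defining inequality $\sum_{i=0}^{j}\mu_i \leq \sum_{i=0}^{j}\nu_i$ of a $\nu$-path is exactly what guarantees that $\mu_i+1$ available positions exist in row $i$ after removing the forbidden ones inherited from lower rows. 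Maximality follows because any hypothetical extra node would either exit $U_{\delta,\nu}$ or violate $U$-compatibility with a previously placed node.

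For (b), I would exhibit an explicit inverse, the \emph{left-flushing} map suggested by the figure: given a $(\delta,\nu)$-tree $T$, set $\mu_i := (\text{number of nodes of } T \text{ in row } i) - 1$. The fact that the resulting $\mu$ is a $\nu$-path reduces to showing that row-sizes of a $(\delta,\nu)$-tree obey the required partial-sum inequalities, which can be read off directly from the shape of $U_{\delta,\nu}$ (specifically from the description of the path in \eqref{path_of_delta_nu}). To check this really is a two-sided inverse of $\rightflushing{\delta}{\nu}$, one verifies that once the row sizes are prescribed the right-to-left/bottom-to-top greedy placement uniquely reconstructs the node positions of $T$ — this uniqueness is essentially the maximality clause in the definition of a $U$-tree.

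The hard part is (c). Given a $\delta$-rotation $\mu \drot{\delta} \mu'$ at a valley $p$, with $q$ the next lattice point of $\mu$ satisfying $\alt_\delta(q) = \alt_\delta(p)$, I would show that under right-flushing the valley $p$ corresponds to a node $q_T = p_T \llcorner r_T$ in $T = \rightflushing{\delta}{\nu}(\mu)$, where $p_T$ is the top endpoint of the north step immediately after $p$ and $r_T$ is the left-most node at the row containing $q$. The key lemma is that no other node of $T$ lies strictly inside the rectangle $p_T \square r_T$: this is forced by the minimality of $q$ as the \emph{first} post-$p$ point of altitude $\alt_\delta(p)$, combined with the forbidden-position rule (which prevents intermediate north-step tops from populating that rectangle). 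A direct comparison then shows that $\rightflushing{\delta}{\nu}(\mu')$ differs from $T$ only in that $q_T$ is replaced by $p_T \urcorner r_T$, which is exactly the tree rotation at $q_T$. The main obstacle here is the bookkeeping that matches ``next point of equal $\delta$-altitude'' on the path side to ``opposite corner of an otherwise-empty rectangle in $U_{\delta,\nu}$'' on the tree side; verifying this correspondence requires a careful tracking of how the $\delta_i$-increments interact with the forbidden positions created during right-flushing, and this is where the real combinatorial content of the proposition lies.
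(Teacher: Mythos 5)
This proposition is not proved in the paper at all: it is imported verbatim from the cited references (Ceballos--Padrol--Sarmiento for the $\nu$-Tamari case and Ceballos--Chenevi\`ere for the alt generalization), so there is no internal proof to compare against. Judged on its own, your proposal correctly identifies the standard three-part strategy used in those references (well-definedness of the greedy placement, invertibility via left-flushing, and the valley-to-corner correspondence for rotations), but it remains an outline rather than a proof, and the gaps are exactly at the load-bearing points.

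Concretely: in (a) you assert that the partial-sum inequality $\sum_{i\le j}\mu_i \le \sum_{i\le j}\nu_i$ is ``exactly what guarantees'' enough non-forbidden positions, but this requires actually computing, for the row of $U_{\delta,\nu}$ at height $j$, both its width (read off from the boundary path in~\eqref{path_of_delta_nu}, which involves $\nu_0,\dots,\nu_b$ \emph{and} $\delta$) and the number of forbidden columns created by lower rows (which equals $\sum_{i<j}\mu_i$, the count of previously placed non-leftmost nodes); you never make this computation, and it is the only place where the specific shape of $U_{\delta,\nu}$ enters part (a). In (b) the two-sided-inverse claim rests entirely on the lemma that a $(\delta,\nu)$-tree is uniquely determined by its row sizes; you assert this is ``essentially the maximality clause'' but maximality of a compatible set does not obviously pin down positions given only cardinalities per row --- this is a genuine lemma needing its own induction. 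Most importantly, in (c) you yourself concede that the matching of ``next lattice point of equal $\delta$-altitude'' with ``opposite corner of an otherwise-empty rectangle'' is ``where the real combinatorial content of the proposition lies,'' and you do not carry it out; moreover your proposed identification of $p_T$ and $r_T$ is a guess that would need to be verified against the right-flushing positions (note also that your indexing conflates row $j$ with height $j$, whereas the paper's convention has height $b-j$ for row $j$). As it stands the proposal is a plausible roadmap, not a proof.
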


    The inverse of the right flushing bijection is called the \defn{left flushing bijection}. It can be described similarly, by recursively adding points from left to right, from bottom to top, forbidding positions located above previously added points which are not the right most node of their row. The resulting set of points is by construction the set of lattice points of a lattice path.

\section{The $U$-Tamari complex}

    The description of alt $\nu$-Tamari lattices in terms on $(\delta,\nu)$-trees reveals a richer structure behind the poset, which is encoded as a simplicial complex. 

    \begin{definition}
    Let $U$ be the stack set of a unimodular sequence $\uu=\{u_0,\dots,u_a\}$. The \defn{$U$-Tamari complex $\TamComplex{U}$} is the simplicial complex of pairwise $U$-compatible subsets of~$U$.  
    The dimension of a face $I\in \TamComplex{U}$ is $\dim(I)=|I|-1$. 
    The facets (maximal dimensional faces) are the $U$-trees.
    \end{definition}

    \begin{lemma}
        The $U$-Tamari complex $\TamComplex{U}$ is a pure simplicial complex of dimension~\mbox{$a+b$}, where $a$ and $b$ are the lengths of the two sides of the smallest rectangle containing~$U$.  
    \end{lemma}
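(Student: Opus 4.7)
The plan is to reduce the dimension count to a direct enumeration of nodes in a $(\delta,\nu)$-tree via the right-flushing bijection. By \Cref{rem_shape_vs_pair}, every stack set arises as $U = U_{\delta_U, \nu_U}$ for a unique pair $(\delta_U, \nu_U)$, where $\nu_U$ is a lattice path from $(0,0)$ to $(a,b)$ and the smallest rectangle containing $U$ has dimensions exactly $a \times b$. Under this identification, the facets of $\TamComplex{U}$ are by definition the $U$-trees, i.e.\ the $(\delta_U,\nu_U)$-trees.

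Next, I would invoke \Cref{prop_right_flushing_bijection}: the right-flushing bijection $\rightflushing{\delta_U}{\nu_U}$ associates to each $\nu_U$-path $\mu = (\mu_0, \dots, \mu_b)$ a $(\delta_U,\nu_U)$-tree $T$ containing exactly $\mu_i + 1$ nodes at height $i$ (the leftmost node together with one node per east step of $\mu$ at that height). Summing over all heights and using that $\sum_{i=0}^{b} \mu_i = a$ since $\mu$ ends at $(a,b)$, I obtain
\[
|T| \;=\; \sum_{i=0}^{b} (\mu_i + 1) \;=\; a + b + 1.
\]

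Since this count is independent of the chosen path $\mu$, every facet of $\TamComplex{U}$ has the same cardinality $a+b+1$, and hence the same dimension $a+b$, which is exactly the statement of purity. No real obstacle is anticipated here, as the argument is a mechanical consequence of already-established results; the only point worth pausing on is to confirm that the per-height count in the description preceding \Cref{prop_right_flushing_bijection} applies to \emph{every} $U$-tree and not just to generic ones, which is guaranteed by the bijective nature of the right-flushing construction.
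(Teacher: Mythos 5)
Your proposal is correct and follows essentially the same route as the paper: identify $U$ with a pair $(\delta,\nu)$ via Remark~\ref{rem_shape_vs_pair}, transport $U$-trees to $\nu$-paths via the right-flushing bijection of Proposition~\ref{prop_right_flushing_bijection}, and observe that every facet therefore has $a+b+1$ elements. Your explicit summation $\sum_{i=0}^{b}(\mu_i+1)=a+b+1$ just spells out the paper's remark that every $\nu$-path has $a+b+1$ lattice points.
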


    \begin{proof}
        Assume that $U$ fits in an $a\times b$ rectangle. By Remark~\ref{rem_shape_vs_pair}, the set $U$ corresponds to a pair $(\delta,\nu)$ where $\delta$ is an increment vector with respect to a lattice path $\nu$ that starts at $(0,0)$ and ends at~$(a,b)$. 
        By Proposition~\ref{prop_right_flushing_bijection}, $U$-trees correspond to $\nu$-paths under the right flushing bijection. Since every $\nu$-path has $a+b+1$ lattice points, so does every $U$-tree. Thus,~$\TamComplex{U}$ is pure of dimension~$a+b$. 
    \end{proof}

    \begin{lemma}[{\cite[Lemma~12]{ceballos_nu_trees_2020}}]
    \label{lem_flips_equal_rotation}
        Two $U$-trees differ by a single element (i.e. the two facets intersect in a codimension 1 face) if and only if they are related by a rotation.
    \end{lemma}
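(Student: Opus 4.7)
The forward direction is immediate from the definition: if $T'=(T\setminus\{q\})\cup\{q'\}$ is the rotation of $T$ at $q$, then $T$ and $T'$ differ by a single element, and $T'$ is again a $U$-tree by the result of~\cite{ceballos_cheneviere_linear_2023} already quoted in the excerpt. For the converse, suppose $T$ and $T'$ are $U$-trees with $T'=(T\setminus\{q\})\cup\{q'\}$ and $q\neq q'$. The plan is to recover the configuration $(p,q,r,q')$ required by the definition of a rotation.

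By the maximality of $T$, the set $T\cup\{q'\}$ cannot be pairwise $U$-compatible; since $q'$ is already $U$-compatible with every element of $T\setminus\{q\}\subseteq T'$, this forces $q$ and $q'$ to be $U$-incompatible. After swapping the roles of $T$ and $T'$ if necessary, I may assume $q$ lies strictly southwest of $q'$, so that the smallest rectangle $R$ containing $q$ and $q'$ lies entirely in $U$. Let $p$ and $r$ denote the NW and SE corners of $R$, so that $q=p\llcorner r$ and $q'=p\urcorner r$. A short check then shows that any element of $T\cap R$ must be $U$-compatible with $q$ (the SW corner of $R$), hence must lie in the left column or bottom row of $R$; symmetrically, any element of $T'\cap R$ must lie in the right column or top row. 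Using $T\setminus\{q\}=T'\setminus\{q'\}$, any element of $T\cap R$ other than $q$ belongs to the intersection of these two sets, which is exactly $\{p,r\}$; hence $T\cap R\subseteq\{q,p,r\}$.

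The technical heart of the argument is to show $p\in T$, and symmetrically $r\in T$. By maximality of $T$, it suffices to check that $p$ is $U$-compatible with every $x\in T$. Suppose instead that $x$ is strictly NE or strictly SW of $p$ with the rectangle $p\square x$ contained in $U$. Splitting into cases on whether the column of $x$ lies inside or outside the column interval of $R$, one glues $p\square x$ to a suitable sub-rectangle of $R$ to produce a rectangle containing $x$ and either $q$ or $q'$ that is entirely contained in $U$; this contradicts the $U$-compatibility of $x$ with $q$ (which holds since $x,q\in T$) or with $q'$ (since $x,q'\in T'$). I expect this combining-of-rectangles step to be the main obstacle; the argument for $r$ is symmetric.

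Combining these two steps yields $T\cap R=\{p,q,r\}$ with $q=p\llcorner r$ and $q'=p\urcorner r$, so $T'=(T\setminus\{q\})\cup\{q'\}$ is precisely the rotation of $T$ at $q$, completing the proof.
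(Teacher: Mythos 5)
Your proof is correct, but it takes a genuinely different route from the paper. The paper disposes of this lemma in three lines by reduction: the statement was already proven for left-aligned stack sets in \cite[Lemma~12]{ceballos_nu_trees_2020}, and for a general $U$ one passes to the smallest left-aligned set $\check{U}\supseteq U$ in the same rectangle, observing that $U$-trees are exactly the $\check{U}$-trees contained in $U$, so the result transfers. You instead give a direct, self-contained combinatorial argument: deduce that $q$ and $q'$ must be $U$-incompatible by maximality, locate them as opposite corners of a full rectangle $R$, show $T\cap R\subseteq\{p,q,r\}$ by intersecting the two ``compatible with a corner'' constraints, and then establish $p,r\in T$ by the rectangle-gluing argument. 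I checked the gluing step you flag as the technical heart: in each case (the column of $x$ inside or outside the column range of $R$, and for the southwest case the row of $x$ relative to $j_q$) the union of $p\square x$ with the appropriate sub-rectangle of $R$ is indeed a full rectangle witnessing incompatibility of $x$ with $q$ (both in $T$) or with $q'$ (both in $T'$, using $x\neq q$), so the contradictions all go through; one should just note that the case where $x$ lies in the same row or column as $q$ or $q'$ must be routed to the other of the two points, which your case split accommodates. The trade-off is clear: the paper's reduction is short but leans on the prior left-aligned result and on the (unproved in this paper) identification of $U$-trees with $\check{U}$-trees inside $U$, whereas your argument is longer but independent of \cite{ceballos_nu_trees_2020} and works uniformly for any stack set.
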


    \begin{proof}
        This statement was proven in~\cite[Lemma~12]{ceballos_nu_trees_2020} in the case where $U$ is left aligned (which corresponds the classical $\nu$-Tamari case). Assume $U$ is not left aligned. Let $\check{U}\supseteq U$ be the smallest left aligned set containing $U$ in the $a\times b$ rectangle. $U$-trees are exactly~\mbox{$\check{U}$-trees} that are contained in $U$. Since the result holds for $\check{U}$-trees, it holds for $U$-trees.   
    \end{proof}
    
    The \defn{boundary} of $\TamComplex{U}$ is the simplicial complex induced by codimension 1 faces that are contained in exactly one facet, see Figure~\ref{fig_boundary_face} for an example. The faces that are not contained in the boundary are called \defn{interior}. Figure~\ref{fig_interior_faces} shows all interior faces for a stacked set $U$. They are drawn labelling the faces of a polytopal complex  which we call the $U$-associahedron further below, see~\Cref{def_thm_U_associahedron} and~\Cref{sec_tropical,sec_Uassociahedron}.

    \begin{figure}[htb]
    \begin{center}
        \input{figures/boundary_face}
    \end{center}
    \caption{A codimension 1 boundary face of $\TamComplex{U}$. 
    }
    \label{fig_boundary_face}
\end{figure}  

\begin{figure}[htb]
    \begin{center}
        \input{figures/interior_faces}
    \end{center}
    \caption{Interior faces of $\TamComplex{U}$ for the stack set $U$ with unimodular sequence $\uu=\{2,3,3,2\}$. 
    }
    \label{fig_interior_faces}
\end{figure}

\begin{remark}[Subword complexes]
    In~\cite[Section~5]{ceballos_nu_trees_2020}, it was shown that the $\nu$-Tamari complex is isomorphic to a well chosen subword complex $\mathcal{S}(Q_{\nu},w_{\nu})$ of type $A$, and that the $\nu$-Tamari lattice is isomorphic to its increasing flip poset.\footnote{We refer to~\cite{ceballos_nu_trees_2020} fro clarification on the subword complex terminology.} The same proof of~\cite[Section~5]{ceballos_nu_trees_2020} works in our more general~case. More precisely, For each pair $(\nu,\delta)$ we can find a word $Q_{\nu,\delta}$ and an element $w_{\nu,\delta}$ such that
    \begin{enumerate}
        \item the alt $\nu$-Tamari complex $\altTamComplex{\nu}{\delta}$ is isomorphic to the subword complex $\mathcal{S}(Q_{\delta,\nu},w_{\delta,\nu})$, and
        \item the alt $\nu$-Tamari lattice $\altTam{\nu}{\delta}$ is isomorphic to the increasing flip poset of $\mathcal{S}(Q_{\delta,\nu},w_{\delta,\nu})$.
    \end{enumerate}
    The word $Q_{\delta,\nu}$ and element $w_{\delta,\nu}$ can be obtained as follows. 
    For a lattice point $p$ in $U=U_{\delta,\nu}$, denote by $d(p)$ the lattice distance from $p$ to the upper left corner of $U$. 
    We label each lattice point $p\in U$ by the transposition $s_{d(p)+1}$. Define $Q_{\delta,\nu}$ as the word obtained by reading the labels in $U$ from left to right, from bottom to top. The element $w_{\delta_\nu}$ is the product of the transpositions in $Q_{\delta,\nu}$ corresponding to the complement of any alt $\nu$-tree (the key is that this is independent of the choice of the tree). This is illustrated for an example in~\Cref{fig_subword_complex}. The result is that $(\delta,\nu)$-trees correspond exactly to the complements of reduced expressions of $w_{\delta,\nu}$ in $Q_{\delta,\nu}$. 
    
    One immediate consequence of the subword complex approach is that alt $\nu$-Tamari complexes are vertex decomposable, and hence shellable~\cite{knutson_miller}. In particular, any linear extension of the alt $\nu$-Tamari lattice (or its opposite lattice) induces a shelling order on the alt $\nu$-Tamari complex.   
    
\end{remark}

\begin{figure}[h]
    \begin{center}
        \input{figures/subword_complex}
    \end{center}
    \caption{The alt $\nu$-Tamari complex as a well chosen subword complex.}
    \label{fig_subword_complex}
\end{figure}

\section{Geometric realizations}

    The main objective of this section is to provide geometric realizations of the $U$-Tamari complex and the $U$-Tamari lattice. In the case where $U$ is left aligned,
    our geometric realizations are exactly the same as the geometric realizations for $\nu$-Tamari lattices presented in~\cite{ceballos_geometry_2019}. 
    As we will see, the same ideas from~\cite{ceballos_geometry_2019}, adapted to our context and presented in a different language, work for any stack set $U$. 

    A big advantage of the presentation in this paper is that the combinatorial objects that we use are simpler to visualize (compare the drawing of a $U$-tree with the drawing of an $(I,J)$-tree in~\cite{ceballos_geometry_2019}).
    In addition, we present a new extremely simple and elegant realization of the $U$-associahedron in Section~\ref{sec_cononial_realization}.

\subsection{Triangulation of the polytope  $\subpolytope{U}$}\label{sec_triangulation}

    We start by presenting a geometric realization of the $U$-Tamari complex as a regular triangulation of a subpolytope~$\subpolytope{U}$ of the Cartesian product of two simplices~$\Delta_a\times \Delta_b$. 

    Let $U$ be the stack set associated to a unimodular sequence $\uu=(u_0,\dots,u_a)$ fitting in an $a\times b$ rectangle (smallest rectangle containing $U$, of width $a$ and height $b$). Recall that the position of the top left corner in $U$ is set to be $(0,0)$. The position of any other point in $U$ is a pair $(i,j)$,  where $i$ increases from left to right and $j$ increases from top to bottom.   

    We consider the product of two simplices
    \[
    \Delta_a \times \Delta_b = 
    \conv\{
    (e_i,e_j): 0\leq i \leq a,\ 0\leq j \leq b
    \}
    \subseteq \mathbb{R}^{a+1}\times \mathbb{R}^{b+1},
    \]
    where $e_0,\dots,e_a$ (resp. $e_0,\dots , e_b$) are the standard basis vectors of $\mathbb R^{a+1}$ (resp.~$\mathbb R^{b+1}$).
    We define the \defn{polytope $\subpolytope{U}$} as the subpolytope 
    \[
    \subpolytope{U} = 
    \conv\{
    (e_i,e_j): (i,j)\in U
    \}
    \subseteq \Delta_a \times \Delta_b.
    \]
    Both polytopes have dimension $a+b$.

    Recall that two lattice polytopes \(P,Q \subset \mathbb{R}^n\) are called \defn{integrally equivalent} if there exists an affine lattice automorphism
\(
x \mapsto Ax+b,
\)
where \(A \in \mathrm{GL}_n(\mathbb{Z})\) (i.e. A is an integer matrix with determinant~$\pm 1$) and \(b \in \mathbb{Z}^n\), such that
\[
Q = AP+b.
\]
In particular, integrally equivalent polytopes have the same normalized (and Euclidean) volume.

    \begin{lemma}\label{lem_changing_U}
        Let $\delta$ and $\delta'$ be two increment vectors with respect to a lattice path~$\nu$ that starts at $(0,0)$ and ends at $(a,b)$. 
        Let $U=U_{\delta,\nu}$ and $U'=U_{\delta',\nu}$ be their corresponding stack sets.  
        The subpolytopes~$\subpolytope{U}$ and $\subpolytope{U'}$ are integrally equivalent: they are related by
        \[
        \subpolytope{U'} = \widetilde \sigma (\subpolytope{U}),
        \]
        for the liner map 
        \begin{align*}
            \widetilde \sigma:\mathbb{R}^{a+1}\times \mathbb{R}^{b+1} & \rightarrow \mathbb{R}^{a+1}\times \mathbb{R}^{b+1} \\
            (e_i,e_j) &\rightarrow (e_{\sigma(i)},e_j)
        \end{align*}
        induced by the permutation $\sigma$ on the set $\{0,\dots,a\}$ from Remark~\ref{rem_shape_permutation_columns}. 
    \end{lemma}
    \begin{proof}
        By Remark~\ref{rem_shape_permutation_columns}, the two stack sets $U$ and $U'$ are obtained from each other by a permutation $\sigma$ of their columns. This permutation satisfies that 
        $(i,j)$ is a lattice point in~$U$ if and only if  
        $(\sigma (i),j)$ is a lattice point in $U'$. 
        The results follows.
    \end{proof}

    Recall that a \defn{subdivision} of a polytope is a partition of it into subpolytopes such that the intersection of any two subpolytopes is a (possibly empty) face of both. The subdivision is called a \defn{triangulation} if every subpolytope is a simplex. We refer to~\cite{loera_triangulations_2010} for further background on triangulations of polytopes.  

    \begin{lemma}\label{lem_number_max_cells}
        The number of maximal dimensional simplices in every triangulation of $\subpolytope{U}$ is equal to the number of $U$-trees. 
    \end{lemma}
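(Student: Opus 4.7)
The plan is to combine two ingredients: first, that every triangulation of $\subpolytope{U}$ has the same number of maximal simplices because the vertex configuration of $\Delta_a\times\Delta_b$ is unimodular; and second, that this common number can be pinned down by reducing to the left-aligned case already handled in \cite{ceballos_geometry_2019}.

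I would begin by invoking the classical fact that the vertex set $\{(e_i,e_j):0\leq i\leq a,\ 0\leq j\leq b\}$ of $\Delta_a\times\Delta_b$ is a unimodular point configuration with respect to the lattice generated in its affine span by the differences $(e_i-e_0,0)$ and $(0,e_j-e_0)$. Equivalently, the $(a+b)$-simplices spanned by such vertices are indexed by spanning trees of the complete bipartite graph $K_{a+1,b+1}$ and each has unit normalized volume. As a face of a $0/1$-polytope, $\subpolytope{U}$ contains no lattice points other than its vertices, so every triangulation of $\subpolytope{U}$ must use only these vertices and hence consists of unimodular $(a+b)$-simplices. Consequently, the number of maximal simplices in any triangulation equals the triangulation-invariant normalized volume $(a+b)!\cdot\mathrm{vol}(\subpolytope{U})$.

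Next, I would reduce to the case where $U$ is left-aligned. By Remark~\ref{rem_shape_vs_pair}, $U$ corresponds to a unique pair $(\delta_U,\nu_U)$, and I set $U':=U_{\delta^{\max},\nu_U}$ with $\delta^{\max}=\nu_U$, which is left-aligned and obtained from $U$ by permuting columns. Lemma~\ref{lem_changing_F} produces a linear map $\widetilde\sigma$ with $\subpolytope{U'}=\widetilde\sigma(\subpolytope{U})$ that is induced by a permutation of standard basis vectors and therefore a lattice automorphism; in particular $\mathrm{vol}(\subpolytope{U})=\mathrm{vol}(\subpolytope{U'})$. On the combinatorial side, Proposition~\ref{prop_right_flushing_bijection} identifies both the set of $U$-trees and the set of $U'$-trees with the set of $\nu_U$-paths, so their cardinalities agree as well. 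It therefore suffices to prove the lemma for the left-aligned set $U'$, which is the classical $\nu_U$-Tamari setup of~\cite{ceballos_geometry_2019}, where an explicit regular (and by the discussion above necessarily unimodular) triangulation of $\subpolytope{U'}$ whose maximal simplices are in bijection with $U'$-trees is constructed. Chaining the equalities yields the desired count.

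The main obstacle is the careful bookkeeping around unimodularity: one must verify that the intrinsic lattice on the affine span of the vertices of $\Delta_a\times\Delta_b$ (which does not depend on the particular subpolytope $\subpolytope{U}$) is preserved by restriction to $\subpolytope{U}$ and by the map $\widetilde\sigma$, so that ``normalized volume'' remains meaningful and invariant throughout the reduction. Once this is in place, the chain of equalities ``(number of maximal simplices) $=$ normalized volume of $\subpolytope{U}$ $=$ normalized volume of $\subpolytope{U'}$ $=$ (number of $U'$-trees) $=$ (number of $U$-trees)'' is immediate.
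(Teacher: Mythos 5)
Your proposal is correct and follows essentially the same route as the paper: both arguments use unimodularity of (subpolytopes of) $\Delta_a\times\Delta_b$ to see that all triangulations have the same number of maximal cells, reduce to the left-aligned case via the column permutation $\widetilde\sigma$ of Lemma~\ref{lem_changing_F}, and then exhibit one triangulation of $\subpolytope{U'}$ with the desired count. The only cosmetic difference is the witness triangulation: the paper restricts the staircase triangulation of $\Delta_a\times\Delta_b$ to $\subpolytope{U'}$, with maximal simplices indexed directly by $\nu$-paths, whereas you cite the regular triangulation of~\cite{ceballos_geometry_2019} indexed by $U'$-trees; both yield the same count via Proposition~\ref{prop_right_flushing_bijection}.
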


    \begin{proof}
        Let $\nu=\nu_U$ be the lattice path corresponding to the stack set $U$ as in Remark~\ref{rem_shape_vs_pair}, and let $\delta=\delta_U$ be the corresponding increment vector. Let $\delta'=(\nu_1,\dots,\nu_b)$ be the maximum possible choice for an increment vector with respect to $\nu$. The set $U'=U_{\delta',\nu}$ is the set of lattice points weakly above $\nu$. 
        By Lemma~\ref{lem_changing_U}, triangulations of $\subpolytope{U}$ and $\subpolytope{U'}$ are in correspondence with each other via the transformation~$\widetilde \sigma$, and therefore have the same number of maximal dimensional simplices. 

        On the other hand, it is well known that the product of two simplices is a \emph{unimodular polytope}, see~\cite[Section~6.2.2]{loera_triangulations_2010}; In particular, the number of $k$-dimensional faces in every triangulation of a product of two simplices is fixed for every value $k$. And this also holds for the subpolytopes $\subpolytope{U}$ and~$\subpolytope{U'}$. 
        
        Therefore, we just need to show that there is one triangulation of $\subpolytope{U'}$ with the desired number of maximal dimensional simplices. 
        Consider the restriction of the \emph{stair case triangulation} of $\Delta_a\times \Delta_b$ to the subpolytope $\subpolytope{U'}$~\cite[Section~6.2.3]{loera_triangulations_2010}. This triangulation has one maximal dimensional simplex $I_\mu$ for each $\nu$-path $\mu$, given by
        \[
        I_{\mu} = \conv \{
        (e_i,e_j) : (i,j) \text{  is a lattice point of } \mu
        \}.
        \]
        Since the number of $\nu$-paths is equal to the number of $U$-trees by Propostion~\ref{prop_right_flushing_bijection}, the result follows. 
    \end{proof}

    We can think of a triangulation of $\subpolytope{U}$ as a collection of subsets of $U$.
    Our goal now is to show that the faces of the $U$-Tamari complex index the cells of a triangulation of $\subpolytope{U}$. We will achieve this via a regular triangulation.
    
    Recall that a \defn{regular subdivision $\subpolytope{U}^h$} of $\subpolytope{U}$ is the projection of the \defn{upper envelope} of the polytope 
    \[
    \conv\{(e_i,e_j,h(i,j)): (e_i,e_j)\in \subpolytope{U}\},
    \]
    which is obtained by lifting the vertices of $\subpolytope{U}$ by a \defn{height function} $h:U\rightarrow \mathbb R$. 
    If the height function is sufficiently generic, all the cells of the subdivision turn out to be simplices. The resulting triangulation is called a \defn{regular triangulation}. Theorem~\ref{thm_noncrossing_triangulation} below characterizes the height functions for which the induced triangulation is the~\mbox{$U$-Tamari} complex.

    \begin{definition}\label{def_height_function}
    The height function $h:U\rightarrow \mathbb R$ is called \defn{non-crossing}, if for every~$i<i'$ and $j<j'$ such that~$(i,j),(i,j'),(i',j'),(i',j)\in U$, we have that 
    \begin{align}\label{eq_noncrossing}
        h(i,j)+h(i',j') > h(i,j')+h(i',j).
    \end{align}
    In other words, $h$ is a non-crossing height function if for every rectangle with corners in $U$, the sum of the heights of the upper-left and lower-right corners is bigger than the sum of the heights of the lower-left and upper-right corners. 
    \end{definition}

    \begin{center}
        \begin{tikzpicture}
            \node (p) at (0,1) {$h(i,j)$};
            \node (r) at (2,0) {h$(i',j')$};
            \draw (p) -- (r);
        \end{tikzpicture}     
        \quad
        \begin{tikzpicture}
            \node at (0,0.7) {$>$};
            \node at (0,0) {};
        \end{tikzpicture}
        \quad 
        \begin{tikzpicture}
            \node (q) at (0,0) {$h(i,j')$};
            \node (q') at (2,1) {$h(i',j)$};
            \draw (q) -- (q');
        \end{tikzpicture}     
    \end{center}

    \begin{example}[The canonical height function]\label{ex_canonical_height_function}
        The map defined by $h(i,j)=ij$ is a non-crossing height function for any unimodular shape $U$. We call it the \defn{canonical height function}. The properties of the induced geometric realizations will be analyzed in Section~\ref{sec_cononial_realization}. 
    \end{example}

    \begin{theorem}[{Cf. \cite[Proposition~2.6]{ceballos_geometry_2019}}]\label{thm_noncrossing_triangulation}
        Let $U$ be the stack set of a unimodular sequence $\uu=(u_0,\dots,u_a)$. The following hold:
        \begin{enumerate}
            \item The $U$-Tamari complex $\TamComplex{U}$ indexes the simplices of a flag regular triangulation of $\subpolytope{U}$. Its maximal simplices are given by $U$-trees. Its dual graph is the Hasse diagram of the $U$-Tamari lattice~$\Tam{U}$.
            \item The regular subdivision $\subpolytope{U}^h$ induced by a height function $h:U\rightarrow \mathbb R$ is equal to $\TamComplex{U}$ if and only if $h$ is non-crossing.\label{item_two_thm_noncrossing}
        \end{enumerate}        
    \end{theorem}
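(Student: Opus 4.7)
The plan is to establish part (2) first and then derive part (1) from it, using the canonical height function of Example~\ref{ex_canonical_height_function} as a witness.

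For the \emph{if} direction of (2), assume $h$ is non-crossing. Suppose $I\subseteq U$ indexes a face of $\subpolytope{U}^h$ and contains a $U$-incompatible pair; such a pair is the anti-diagonal of some rectangle with corners $(i,j),(i,j'),(i',j),(i',j')\in U$ and $i<i'$, $j<j'$, namely the pair $\{(i',j),(i,j')\}$. The four lifted points
\begin{equation*}
    (e_i,e_j,h(i,j)),\ (e_i,e_{j'},h(i,j')),\ (e_{i'},e_j,h(i',j)),\ (e_{i'},e_{j'},h(i',j'))
\end{equation*}
sit over the vertices of a planar square in $\Delta_a\times\Delta_b$, since $(e_i,e_j)+(e_{i'},e_{j'})=(e_i,e_{j'})+(e_{i'},e_j)$. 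The upper envelope of this lifted square is triangulated along the diagonal with the larger sum of heights, which by the non-crossing inequality is the diagonal through the $U$-compatible corners $(i,j)$ and $(i',j')$. Hence the incompatible anti-diagonal $\{(i',j),(i,j')\}$ cannot lie in any cell of $\subpolytope{U}^h$, contradicting the assumption on $I$. Thus every face of $\subpolytope{U}^h$ is a pairwise $U$-compatible subset, i.e.\ a face of $\TamComplex{U}$.

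Now I upgrade this inclusion to equality. Since $\dim\subpolytope{U}=a+b$, each maximal cell of $\subpolytope{U}^h$ has at least $a+b+1$ vertices, while every pairwise $U$-compatible subset of $U$ has at most $a+b+1$ elements, namely the size of a $U$-tree (by purity of $\TamComplex{U}$). Therefore every maximal cell is a simplex whose vertex set is a $U$-tree, and Lemma~\ref{lem_number_max_cells} forces this correspondence to be a bijection, so $\subpolytope{U}^h=\TamComplex{U}$. The \emph{only if} direction of (2) applies the planar-square analysis in reverse: if the subdivision equals $\TamComplex{U}$, the compatible diagonal must be chosen at every rectangle, which is the non-crossing inequality.

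For part (1), the canonical height $h(i,j)=ij$ is non-crossing because
\begin{equation*}
    h(i,j)+h(i',j')-h(i,j')-h(i',j)=(i'-i)(j'-j)>0
\end{equation*}
whenever $i<i'$ and $j<j'$. By part (2), the $U$-Tamari complex is realized as the regular triangulation $\subpolytope{U}^h$ of $\subpolytope{U}$; its maximal simplices are the $U$-trees by construction, and flagness is immediate from the definition of $\TamComplex{U}$ as the complex of pairwise $U$-compatible subsets. Finally, Lemma~\ref{lem_flips_equal_rotation} identifies the codimension one adjacencies of maximal simplices with $U$-rotations, which are exactly the covering relations of $\Tam{U}$, so the dual graph is the Hasse diagram of the $U$-Tamari lattice.

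The most delicate step is the squeeze that upgrades the rectangle-by-rectangle obstruction into equality of subdivisions. It crucially uses that both upper bounds (maximal cells have at least $a+b+1$ vertices by dimension; pairwise compatible subsets have at most $a+b+1$ elements by purity) match exactly, and that the global count of maximal cells is governed by Lemma~\ref{lem_number_max_cells}. This is the natural extension of~\cite[Proposition~2.6]{ceballos_geometry_2019} from left-aligned stack sets to arbitrary stack sets, and the main verification is that these ingredients still align in the non-left-aligned setting.
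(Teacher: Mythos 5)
Your proposal is correct and follows essentially the same route as the paper's proof: the same local analysis of the square spanned by a rectangle with corners in $U$ to link the non-crossing inequality with the choice of diagonal, the same squeeze between the dimension bound ($\geq a+b+1$ vertices per maximal cell) and purity ($\leq a+b+1$ elements per compatible set), and the same appeal to Lemma~\ref{lem_number_max_cells} to conclude that all $U$-trees occur. The only (harmless) difference is that you spell out part (1) explicitly via the canonical height function and Lemma~\ref{lem_flips_equal_rotation}, where the paper simply reduces (1) to (2).
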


    \begin{proof}
        Let $h:U\rightarrow \mathbb R$ be a height function. It suffices to show part~\eqref{item_two_thm_noncrossing}. 
    
        The restriction of $\subpolytope{U}^h$ to the rectangle formed by four points $(i,j),(i,j'),(i',j'),(i',j)\in U$ is one of the following three possible subdivisions:

        \begin{center}
        \begin{tikzpicture}
            \node (p) at (0,1) {$(i,j)$};
            \node (q) at (0,0) {$(i,j')$};
            \node (r) at (2,0) {$(i',j')$};
            \node (q') at (2,1) {$(i',j)$};
            \draw (p) -- (q) -- (r) -- (q') -- (p);
            \draw (p) -- (r);
        \end{tikzpicture}            
        \quad 
        \begin{tikzpicture}
            \node (p) at (0,1) {$(i,j)$};
            \node (q) at (0,0) {$(i,j')$};
            \node (r) at (2,0) {$(i',j')$};
            \node (q') at (2,1) {$(i',j)$};
            \draw (p) -- (q) -- (r) -- (q') -- (p);
            \draw (q) -- (q');
        \end{tikzpicture}     
        \quad
                \begin{tikzpicture}
            \node (p) at (0,1) {$(i,j)$};
            \node (q) at (0,0) {$(i,j')$};
            \node (r) at (2,0) {$(i',j')$};
            \node (q') at (2,1) {$(i',j)$};
            \draw (p) -- (q) -- (r) -- (q') -- (p);
        \end{tikzpicture}            
        \end{center}

        The first case is obtained when Equation~\eqref{eq_noncrossing} is satisfied ($h$ is non-crossing). The second and third cases are obtained when $>$ is replaced by $<$ or $=$ in Equation~\eqref{eq_noncrossing}, respectively.

        Assume that $\subpolytope{U}^h$ is a triangulation equal to $\Tam{U}$. 
        Since its is a triangulation, case 3 can not appear. 
        Since $(i,j')$ and $(i',j)$ are $U$-incompatible, case 2 can not appear either. 
        Therefore case 1 is forced to happen, in which case Equation~\eqref{eq_noncrossing} is satisfied, and so $h$ is non-crossing.

        Now assume that $h$ is non-crossing and prove that $\subpolytope{U}^h$ is equal to $\Tam{U}$. 
        Let~$A$ be a subset of~$U$ that represents the vertices of a maximal dimensional cell in~$\subpolytope{U}^h$. Since $h$ is non-crossing, $A$ is a pairwise $U$-compatible subset of $U$ (otherwise the diagonal edge in case 2 would appear, which is a contradiction). Since the dimension of the cell corresponding to $A$ is $a+b$, then $A$ has at least $a+b+1$ elements. Therefore $A$ is a $U$-tree and has exactly $a+b+1$ elements (since all maximal $U$-compatible elements have exactly $a+b+1$ elements). Furthermore, the cell of~$A$ is a simplex, and so $\subpolytope{U}$ is a triangulation.

        We have proven that if $h$ is non-crossing then $\subpolytope{U}^h$ is a triangulation whose maximal simplices are $U$-trees. It remains to show that every $U$-tree is a maximal simplex in this triangulation. This is achieved by counting the number of simplices in this triangulation, and seeing that it coincides with the number of $U$-trees. This was proven in Lemma~\ref{lem_number_max_cells}.        
    \end{proof}
    
    \begin{remark}
        In the literature, it is common to define a regular triangulation of a polytope by projecting the lower envelope of a lifting of its vertices.
        Contrary to how it is done in~\cite{ceballos_geometry_2019},
        we use the \emph{upper envelope} in this paper for convenience. 
        This will be reflected in Section~\ref{sec_tropical}, where we consider tropical hyperplanes defined by the $\min$ operation instead of $\max$. 
    \end{remark}

    \begin{remark}
        In the case where $U$ is the set of lattice points of an $a\times b$ rectangle, a non-crossing height function $h$ with respect to $U$ is just a $(b+1)\times (a+1)$ matrix such that for every quadrangle, the sum of the entries of the upper-left and lower-right corners is bigger than the sum of the entries of the lower-left and upper-right corners. If we allow equality in the defining Equation~\eqref{eq_noncrossing} (that is, replace $>$ by $\geq$), such a matrix is known as a \defn{Monge matrix} in combinatorial optimization~\cite{bukard_monge_1996}.\footnote{The definition of a Monge matrix uses the inequality $\leq$ instead of $\geq$. This small difference can be fixed by reversing the order of the rows, reversing the order of the columns, or by multiplying the matrix by $-1$.} There has been a substantial amount of research about Monge properties and their applications (see~\cite{bukard_monge_1996} and the references therein). It would be interesting to investigate, whether the results of this paper have some relation with problems in combinatorial optimization. 
        
        In particular,  the $U$-associahedron introduced in Section~\ref{sec_tropical} could play an interesting role in some combinatorial optimization models. In the case where $U$ is the set of lattice points in an $a\times b$ rectangle, the vertices of the $U$-associahedron encode partitions fitting in the rectangle, and the edges connect partitions that differ by adding or removing one box. The geometric realizations of this polytopal complex in Sections~\ref{sec_tropical} and~\ref{sec_cononial_realization} might have some connections to optimization problems.  
    \end{remark}
    
\subsection{Fine mixed subdivision of the polytope $\genPerm{U}$}
\label{sec_fine_mixed_subdivision}
    
    The triangulation of the polytope $\subpolytope{U}$ in Theorem~\ref{thm_noncrossing_triangulation} is the central geometric realization of the $U$-Tamari complex in this paper. Although it is very powerful, it has the disadvantage of having a big dimension which is hard to visualize. 
    One can overcome this disadvantage by applying the Cayley trick from~\cite{huber_rambau_santos_cayleytrick_2000}, which allows us to represent a high dimensional triangulation in fewer dimensions.

    As above, let $U$ be the stack set of a unimodular sequence $\uu=(u_0,\dots, u_a)$ and assume that the smallest rectangle containing it is an $a\times b$ rectangle. Given a subset $J\subseteq \{0,1,\dots, b\}$, define 
    \[
    \Delta_J= \conv\{e_j: j \in J\}\subseteq \mathbb{R}^{b+1}.
    \]
    The \defn{polytope $\genPerm{U}$} is the Minkowski sum 
    \begin{align}\label{eq_gen_permutahedron}
        \genPerm{U} =
        \sum_{i=0}^a \Delta_{\{j:\ (i,j)\in U\}}.
    \end{align}
    This polytope is a \emph{generalized permutahedron} in the sense of~\cite{postnikov_permutahedra_2009}, and we are interested in subdivisions of it whose cells are also Minkowski sums. 

    Given a $U$-tree $T$, we define the cell 
    \begin{align}
        \genPerm{T} = \sum_{i=0}^a \Delta_{\{j:\ (i,j)\in T\}}.
    \end{align}

    Applying the Cayley trick~\cite[Theorem~3.1]{huber_rambau_santos_cayleytrick_2000} to the triangulation in Theorem~\ref{thm_noncrossing_triangulation}, gives rise to the following \emph{coherent fine mixed subdivision} of the polytope $\genPerm{U}$. We refer to~\cite{huber_rambau_santos_cayleytrick_2000,santos_cayley_product_2005} for a detailed explanation on this terminology. 

    \begin{corollary}\label{cor_fine_mixed_subdivision}
        The polyhedral cells $\genPerm{T}$ as $T$ ranges over all $U$-trees are the maximal dimensional cells of a coherent fine mixed subdivision $\MixedSubdivision{U}$ of the polytope~$\genPerm{U}$. 
        Moreover, this subdivision satisfies the following properties: 
        \begin{enumerate}
            \item The face poset $\MixedSubdivision{U}$ is isomorphic to the restriction of the face poset of the $U$-Tamari complex $\TamComplex{U}$ to the faces that contain at least one element per column.\label{item_one_cor_fine_mixed_subdivision} 
            \item The poset of interior faces of $\MixedSubdivision{U}$ is isomorphic to the poset of interior faces of $\TamComplex{U}$. 
            \label{item_two_cor_fine_mixed_subdivision} 
            \item The dual graph of $\MixedSubdivision{U}$ is the Hasse diagram of the $U$-Tamari lattice~$\Tam{U}$.
        \end{enumerate}
    \end{corollary}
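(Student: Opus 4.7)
The plan is to deduce all three statements as immediate consequences of the Cayley trick of Huber--Rambau--Santos~\cite{huber_rambau_santos_cayleytrick_2000} applied to the regular triangulation from Theorem~\ref{thm_noncrossing_triangulation}. Observe first that $\subpolytope{U}$ is precisely the Cayley embedding of the simplices $\Delta_{\{j:(i,j)\in U\}}$ for $i=0,\dots,a$, so the Cayley trick yields a coherence-preserving bijection between polyhedral subdivisions of $\subpolytope{U}$ and mixed subdivisions of $\genPerm{U}=\sum_{i=0}^{a}\Delta_{\{j:(i,j)\in U\}}$. Under this bijection, a face of the triangulation indexed by $I\subseteq U$ is sent to the Minkowski cell $\sum_{i=0}^{a}\Delta_{\{j:(i,j)\in I\}}$, which becomes $\genPerm{T}$ precisely when $I=T$ is a $U$-tree. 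This immediately produces the announced coherent fine mixed subdivision $\MixedSubdivision{U}$ whose maximal cells are the $\genPerm{T}$.

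Part~(1) then follows directly from the standard form of the Cayley trick: cells of a fine mixed subdivision are in inclusion-preserving bijection with those faces of the Cayley triangulation whose vertex set meets every Cayley summand; in our setting, this means those faces $I\in \TamComplex{U}$ containing at least one element in each column of $U$.

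Part~(2) follows from~(1) combined with Proposition~\ref{prop_interiorfaces}. Interior faces of $\TamComplex{U}$, being characterized as containing the root and at least one element per row and column, automatically satisfy the column condition of~(1), so they appear in the subposet indexing $\MixedSubdivision{U}$. The remaining step is to verify that this inclusion-preserving bijection restricts to a bijection between interior faces on both sides. I would do this by directly comparing the boundary descriptions: a face $I$ of the triangulation lies in the boundary of $\subpolytope{U}$ iff it misses some row or some column of $U$, or does not contain the root (this is implicit in the proof of Proposition~\ref{prop_interiorfaces}), and a parallel analysis on the Minkowski side shows that a cell of $\MixedSubdivision{U}$ lies on the boundary of $\genPerm{U}$ iff its Cayley preimage lies on the boundary of $\subpolytope{U}$.

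Finally, Part~(3) is immediate: maximal cells $\genPerm{T}$ correspond bijectively to $U$-trees, and two such cells share a codimension-one wall in $\MixedSubdivision{U}$ iff the corresponding $U$-trees share a codimension-one face in $\TamComplex{U}$, which by Lemma~\ref{lem_flips_equal_rotation} happens exactly when they are related by a rotation. Combined with Theorem~\ref{thm_noncrossing_triangulation}, this identifies the dual graph of $\MixedSubdivision{U}$ with the Hasse diagram of $\Tam{U}$. The main obstacle I anticipate is the careful matching of geometric interiority in part~(2) across the Cayley trick: although standard in spirit, it requires an explicit boundary analysis for $\subpolytope{U}$ and $\genPerm{U}$ when $U$ is an arbitrary (possibly non--left-aligned) stack set, as one cannot simply invoke the classical $\nu$-Tamari case treated in~\cite{ceballos_geometry_2019}.
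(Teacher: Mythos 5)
Your proposal is correct and follows essentially the same route as the paper: the paper's proof simply observes that the statement follows directly from the Cayley trick of Huber--Rambau--Santos applied to the regular triangulation of Theorem~\ref{thm_noncrossing_triangulation}, with part~(3) a formal consequence of parts~(1) and~(2). Your additional elaboration of the face correspondence and the interiority-matching in part~(2) fills in details the paper leaves implicit, but does not change the argument.
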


    \begin{proof}
        It suffices to prove Parts~\eqref{item_one_cor_fine_mixed_subdivision} and \eqref{item_two_cor_fine_mixed_subdivision}. They follow directly from~\cite[Theorem~3.1]{huber_rambau_santos_cayleytrick_2000}.   
    \end{proof}

    Although $\genPerm{U}\subseteq \mathbb R^{b+1}$, this polytope lies in the affine space determined by the sum of the coordinates equal to $a+1$. So, we can draw the subdivision in Corollary~\ref{cor_fine_mixed_subdivision} in $\mathbb R^b$. \Cref{fig_fineSubdivisionENEN,fig_fineSubdivisionNENEN,fig_fineSubdivisionsEENEEN} illustrate several examples.

    \begin{example}[The Tamari lattice]\label{ex_fineSubdivisonENEN}
        The classical Tamari lattice is the dual of the fine mixed subdivision~$\MixedSubdivision{U}$ of~$\genPerm{U}$, where $U$ is the set of lattice points weakly above the path $\nu=(EN)^n$.
        For $n=2$, we have $\nu=ENEN$  and the polytope~$\genPerm{U}=\Delta_{012}+\Delta_{012}+\Delta_{01}$ is illustrated in Figure~\ref{fig_fineSubdivisionENEN}.
    \end{example}

    \begin{figure}[htb]
        \centering
        \input{figures/fineSubdivisionENEN}
        \caption{The classical Tamari lattice as the dual of the fine mixed subdivision~$\MixedSubdivision{U}$ for the stack set $U$ of lattice points weakly above the path~$\nu=ENEN$.}
        \label{fig_fineSubdivisionENEN}
    \end{figure}

    This polytope is subdivided into five cells corresponding to the five $\nu$-trees, which are also shown in the figure. Their Minkowski sums are shown in Figure~\ref{fig_fineSubdivisionENEN_cells}.

    \begin{figure}[htb]
        \input{figures/fineSubdivisionENEN_cells}
        \caption{Minkowski sums of the cells in~\Cref{fig_fineSubdivisionENEN}.}
        \label{fig_fineSubdivisionENEN_cells}
    \end{figure}

    \begin{remark}[The Pitman--Stanley polytope]
        Alternatively, the classical Tamari lattice can be obtained as the dual of the fine mixed subdivision~$\MixedSubdivision{U}$ of~$\genPerm{U}$, where $U$ is the set of lattice points weakly above the path $\nu=(NE)^{n+1}$.\footnote{Note that adding an east step at the end of the path $\nu$ transforms the polytope $\genPerm{U}$ by translation by~$e_0$, while adding a north step at the beginning increases the dimension by one. In Example~\ref{ex_fineSubdivisonENEN}, we removed the initial north step and final east step because they are irrelevant for the definition of the lattice. Taking that convention, the dimension of the resulting polytope~$\genPerm{U}$ coincides with that of the corresponding associahedron.}
        In this case, $\genPerm{U}=\sum_{k=0}^{n+1} \Delta_{\{0,1,\dots,k\}}$ which is integrally equivalent to the Pitman--Stanley polytope $\Pi_{n+1}(x)$ from~\cite{PitmanStanleyPolytope}, for $x=1^{n+1}=(1,\dots,1)$. 
        
        The case $n=2$ is illustrated in~\Cref{fig_fineSubdivisionNENEN}. The path $\nu=NENENE$ and the polytope~$\genPerm{U}$ is the Minkowski sum~$\genPerm{U}=\Delta_{0123}+\Delta_{012}+\Delta_{01}+\Delta_{0}$, which is integrally equivalent to the Pitman--Stanley polytope~$\Pi_3(1,1,1)$. It is subdivided into five cells corresponding to the five $\nu$-trees (classical rooted binary trees).

        \begin{figure}[htb]
            \centering
            \input{figures/fineSubdivisionNENEN}
            \input{figures/fineSubdivisionNENEN_expanded}
            \caption{Alternative realization of the classical Tamari lattice as the dual of the fine mixed subdivision~$\MixedSubdivision{U}$ for the stack set $U$ of lattice points weakly above the path~$\nu=NENENE$.}
            \label{fig_fineSubdivisionNENEN}
        \end{figure}

        Recall that for $x=(x_0,x_1,\dots, x_n)$ with $x_i>0$ for all $i$, the \defn{Pitman--Stanley polytope} is defined as
        \[
        \Pi_{n+1}(x) = \left\{ 
        (y_0,y_1,\dots,y_n)\in \mathbb{R}^{n+1}:\ y_i\geq 0 \text{ and } y_0+\dots +y_i \leq x_0+\dots +x_i \text{ for } 0\leq i \leq n 
        \right\}.
        \] 
        If $U$ is the set of lattice points weakly above $\nu=(E^{m_0}N)(E^{m_1}N)\dots (E^{m_n}N)$, then 
        \[
        \genPerm{U} = 
        \Delta_{0,\dots,n+1} + 
        \sum_{i=0}^n m_i \Delta_{0, \dots,n+1-i}
        \] 
        which is integrally equivalent to $\Pi_{n+1}(m_0+1,m_1,m_2,\dots , m_n)$ via the map 
        \[
        (y_0,y_1,\dots , y_{n+1}) \to 
        (y_{n+1},\dots,y_1).
        \]
    \end{remark}

    \begin{example}[Some alt $\nu$-Tamari lattices]\label{ex_fineSubdivisionsEENEEN}
        Let $\nu=EENEEN=(2,2,0)$ and $\delta$ be an increment vector with respect to $\nu$, i.e. $\delta=(2,0)$, $(1,0)$ or $(0,0)$. The alt $\nu$-Tamari lattice~$\altTam{\nu}{\delta}$ is the dual of the fine mixed subdivision~$\MixedSubdivision{U_{\delta,\nu}}$ of $\genPerm{U_{\delta,\nu}}$. The subdivisions for the three possible values of $\delta$ are shown in~\Cref{fig_fineSubdivisionsEENEEN}.
    \end{example}
    
    \begin{figure}[htb]
        \centering
        \input{figures/fineSubdivisionsEENEEN}
        \caption{Examples of the fine mixed subdivisions~$\MixedSubdivision{U_{\delta,\nu}}$ of $\genPerm{U_{\delta,\nu}}$ for the path $\nu=EENEEN=(2,2,0)$ and the three possible choices of $\delta$: $(2,0)$, $(1,0)$, and $(0,0)$.}
        \label{fig_fineSubdivisionsEENEEN}
    \end{figure}


    As we can see in Figure~\ref{fig_fineSubdivisionsEENEEN}, the shown fine mixed subdivisions are different, but all of them subdivide the same polytope. This follows from the following fact. 

    \begin{proposition}
        Let $\delta$ be an increment vector with respect to a lattice path $\nu$, and~$U=U_{\delta,\nu}$ be its corresponding shape. Then
        $
        \genPerm{U}=  \genPerm{\nu},
        $
        where 
        \[
        \genPerm{\nu}=
                \sum_{i=0}^a \Delta_{\{j:\ (i,j) \text{ is weakly above } \nu\}}.
        \]
        Therefore, for a fixed $\nu$, the Hasse diagram of every alt $\nu$-Tamari lattice $\altTam{\nu}{\delta}$ can be realized as the dual graph of fine mixed subdivision of the same polytope $\genPerm{\nu}$. 
    \end{proposition}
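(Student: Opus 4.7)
My plan is to exploit the commutativity of Minkowski sums together with Remark~\ref{rem_shape_permutation_columns}, which says that varying $\delta$ only permutes the columns of $U_{\delta,\nu}$. Since $\genPerm{U}$ is defined as a Minkowski sum indexed by columns, permuting columns leaves it unchanged. Thus $\genPerm{U_{\delta,\nu}}$ is the same polytope for every valid $\delta$, and it only remains to identify this common polytope with $\genPerm{\nu}$ by choosing a convenient $\delta$.

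More precisely, for any increment vector $\delta$, every column of the stack set $U_{\delta,\nu}$ is by definition a top-aligned prefix $\{0,1,\dots,u_i-1\}$ of the $i$-th column, so the summand in $\genPerm{U_{\delta,\nu}}$ coming from column $i$ is $\Delta_{\{0,\dots,u_i-1\}}$ and depends only on the height $u_i$. Therefore $\genPerm{U_{\delta,\nu}}$ depends only on the multiset $\{u_0,\dots,u_a\}$ of column heights. By Remark~\ref{rem_shape_permutation_columns}, replacing $\delta$ by another increment vector $\delta'$ permutes this multiset by a bijection $\sigma$ on $\{0,\dots,a\}$, so the multiset, and hence $\genPerm{U_{\delta,\nu}}$, is independent of $\delta$.

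To identify the common value with $\genPerm{\nu}$, I specialize to the maximal choice $\delta=\delta^{\max}:=(\nu_1,\dots,\nu_b)$. Then the path from \eqref{path_of_delta_nu} collapses to $(S^b)\, E^{\nu_0} N E^{\nu_1}\cdots N E^{\nu_b}$, that is, $b$ initial south steps followed by $\nu$ itself. The smallest containing rectangle is $a\times b$, and a lattice point of this rectangle is weakly above this path if and only if it is weakly above $\nu$ (the preliminary south steps only force the leftmost column of the rectangle to be included, which is already included since $\nu$ starts at $(0,0)$). Hence $U_{\delta^{\max},\nu}=\{(i,j):(i,j)\text{ is weakly above }\nu\}$, which yields
\[
  \genPerm{U_{\delta^{\max},\nu}}=\sum_{i=0}^a \Delta_{\{j:(i,j)\text{ is weakly above }\nu\}}=\genPerm{\nu}.
\]
Combining with the previous paragraph gives $\genPerm{U_{\delta,\nu}}=\genPerm{\nu}$ for every $\delta$.

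The ``Therefore'' statement is then immediate: by Corollary~\ref{cor_fine_mixed_subdivision}, the Hasse diagram of $\altTam{\nu}{\delta}\cong\Tam{U_{\delta,\nu}}$ is the dual graph of the fine mixed subdivision $\MixedSubdivision{U_{\delta,\nu}}$ of $\genPerm{U_{\delta,\nu}}$; and we have just shown this underlying polytope is $\genPerm{\nu}$ regardless of $\delta$. The only mildly technical point is the verification that the path in \eqref{path_of_delta_nu} with $\delta=\delta^{\max}$ really describes the region weakly above $\nu$; all the rest is formal manipulation of the Minkowski sum together with the column-permutation statement already recorded in Remark~\ref{rem_shape_permutation_columns}.
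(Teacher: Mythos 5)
Your proof is correct and follows essentially the same route as the paper: identify $\genPerm{\nu}$ with $\genPerm{U_{\delta^{\max},\nu}}$ and then invoke Remark~\ref{rem_shape_permutation_columns} together with the commutativity of the Minkowski sum under the column permutation. You merely make explicit two details the paper leaves implicit (that each column of a stack set is a top-aligned prefix, so each summand depends only on the column height, and that $U_{\delta^{\max},\nu}$ is exactly the set of lattice points weakly above $\nu$), which is a welcome but not substantively different elaboration.
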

    \begin{proof}
        Note that the polytope $\genPerm{\nu}=\genPerm{U_{\delta^{\max},\nu}}$, where $\delta^{\max}=(\nu_1,\dots,\nu_b)$. 
        By Remark~\ref{rem_shape_permutation_columns}, the two sets $U_\delta$ and $U_{\delta^{\max}}$ differ by a permutation of columns. This permutation produces a reordering of the summands in Equation~\eqref{eq_gen_permutahedron}. Since such a reordering does not alter the result of the Minkowski sum, 
        the result follows. 
    \end{proof}

    \begin{remark}
        We can also give an alternative definition of the polytope $\genPerm{U}$ and the cells $\genPerm{T}$ by taking Minkowski sums of rows instead of columns: 
        \[
        \genPerm{U}' = \sum_{j=0}^b \Delta_{\{i:\ (i,j)\in U\}}, 
        \qquad 
        \genPerm{T}' = \sum_{j=0}^b \Delta_{\{i:\ (i,j)\in T\}}.
        \]
        In this set up, we would get a realization of the $U$-Tamari lattice as the dual of a fine mixed subdivision of a polytope in an affine space of dimension $a$ in $\mathbb R^{a+1}$ ($\cong \mathbb R^a$). 
    \end{remark}

\subsection{Tropical hyperplane arrangement realization}\label{sec_tropical}

    In this section we present our third geometric realization of the $U$-Tamari lattice using the duality between regular triangulations of (a subpolytope of) $\Delta_a\times \Delta_b$ and tropical hyperplane arrangements conceived in~\cite{develin_tropical_2004} and further developed in~\cite{ardila_tropical_2009,fink_stiefel_2015}.
    
    Intuitively, the cell complex induced by the tropical hyperplane arrangement is dual to the fine mixed subdivision $\MixedSubdivision{U}$ in Corollary~\ref{cor_fine_mixed_subdivision}, as illustrated in the example in~\Cref{fig_fineSubivisionENEN_tropical} (c.f.~\Cref{fig_fineSubdivisionENEN}).

    \begin{figure}[htb]
        \centering
        \input{figures/fineSubdivisionENEN_tropical}
        \caption{An example of the duality between fine mixed subdivisions and arrangements of (degenerate) tropical hyperplanes. }
        \label{fig_fineSubivisionENEN_tropical}
    \end{figure}

    Throughout the section, we fix a non-crossing height function $h$ with respect to a unimodular shape~$U$, and assume that the smallest rectangle containing $U$ is an~$a\times b$ rectangle. We set $h(i,j)=-\infty$, for every position $(i,j)$ inside the $a\times b$ rectangle that is not inside $U$. 

    The \defn{tropical projective space} is the space
    \[
    \mathbb{TP}^b =
    \left(
    (\mathbb R \cup -\infty )^{b+1} \setminus (-\infty,\dots ,-\infty)/\mathbb{R}(1,\dots ,1)
    \right).    
    \]
    The \defn{tropical semiring} is the tuple $(\mathbb R\cup -\infty,\oplus, \otimes)$, where the tropical addition $\oplus$ and the tropical multiplication $\otimes$ are defined by $m\oplus n = \max \{m,n\}$ and $m\otimes n=m+n$.
    
    We consider the arrangement $\arrangement=(H_i)_{0\leq i\leq a}$ of \defn{inverted tropical hyperplanes} centered at the points~$v_i=(h(i,j))_{0\leq j\leq b}$, which are explicitly defined by
    \begin{align}\label{eq_tropical_hyperplanes}
        H_i = 
        \left\{
        y \in \mathbb{TP}^b : \min_{(i,j)\in U} \{ -h(i,j)+y_j \} \text{ is attained twice} 
        \right\}.
    \end{align}
    In the case where some of the $h(i,j)$ are equal to $-\infty$, the corresponding $H_i$ is a \defn{degenerate tropical hyperplane}, see~\cite{fink_stiefel_2015}. 
    The third and fourth illustrations in Figure~\ref{fig_fineSubivisionENEN_tropical} show three tropical hyperplanes in~$\mathbb{TP}^2\cong \mathbb{R}^2$, one of which (in black) is degenerate.  
    They are drawn using two different identifications of~$\mathbb{TP}^2$ in~$\mathbb{R}^3$; the first is obtained by intersecting with the plane $y_0+y_1+y_2=0$, while the second with the plane $y_0=0$. 
    
    Figure~\ref{fig_tropical_region_labels} illustrates a single tropical hyperplane $H_i$ intersected with the plane~$y_0=h(i,0)$. If all~$h(i,j)$ are finite, then $H_i$ subdivides the space $\mathbb{TP}^d$ in $d+1$ regions, where the minimum in Equation~\eqref{eq_tropical_hyperplanes} is attained only once. In Figure~\ref{fig_tropical_region_labels} (left) we are considering the case $d=2$, and the three regions are labeled by the term that achieves the minimum. The \defn{center} $v_i\in\mathbb{TP}^d$ of the tropical hyperplane $H_i$ is the point where the all the terms in Equation~\eqref{eq_tropical_hyperplanes} are equal; For instance, for $d=2$ we have $v_i=(h(i,0),h(i,1),h(i,2))$. In some of the $h(i,j)=-\infty$, then we get a degenerate tropical hyperplane; Figure~\ref{fig_tropical_region_labels} (right) shows a degenerate tropical hyperplane $\min\{-h(i,0)+y_0,-h(i,1)+y_1\}$ when $d=2$ and $h(i,2)=-\infty$. Note that setting $h(i,j)=-\infty$ is equivalent to removing the term~$-h(i,j)+y_j$ from Equation~\eqref{eq_tropical_hyperplanes}, because the minimum is never achieved at this term in that case.

    \begin{figure}[htb]
        \centering
        \input{figures/tropical_region_labels}
        \caption{The tropical hyperplane $H_i$ in $\mathbb{TP}^2$ drawn at the intersection with the plane $y_0=h(i,0)$. The regions are labeled by the term of Equation~\eqref{eq_tropical_hyperplanes} where the minimum is attained. The right is a degenerate tropical hyperplane, assuming \mbox{$h(i,2)=-\infty$}.}
        \label{fig_tropical_region_labels}
    \end{figure}

    The following result was proven by Develin and Sturmfels~\cite{develin_tropical_2004} in the case of subdivisions of products of two simplices $\Delta_a\times \Delta_b$, and generalized to subdivisions of a subpolytope $\subpolytope{U}\subseteq \Delta_a\times \Delta_b$ by Fink and Rinc\'on in~\cite{fink_stiefel_2015}. The last amounts to setting the heights of points outside $U$ to be equal $-\infty$.     

    \begin{theorem}[{\cite[Proof of Theorem~1]{develin_tropical_2004}, \cite[Section~4]{fink_stiefel_2015}}]
    \label{thm_develin_sturmfels}
        The arrangement~$\arrangement$ induces a polyhedral decomposition of $\mathbb{TP}^b$, whose poset of bounded faces is anti-isomorphic to the poset of interior faces of the subdivision $\subpolytope{U}^h$ of $\subpolytope{U}$ induced by $h$.
    \end{theorem}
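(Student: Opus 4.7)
The plan is to prove the anti-isomorphism through the classical notion of the \emph{type} of a point $y\in\mathbb{TP}^b$. For each such $y$, define
\[
T_i(y) = \bigl\{\, j : (i,j)\in U \text{ and } -h(i,j)+y_j = \min_{(i,k)\in U}\{-h(i,k)+y_k\}\,\bigr\},
\]
and set $T(y)=(T_0(y),\dots,T_a(y))$. The first step is to observe that the polyhedral decomposition of $\mathbb{TP}^b$ induced by $\arrangement$ is precisely the partition of $\mathbb{TP}^b$ into closures of level sets of the map $y\mapsto T(y)$. This follows essentially from the definition of $H_i$: a point lies on $H_i$ exactly when $|T_i(y)|\geq 2$, so crossing $H_i$ changes the component $T_i$. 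In particular, smaller cells correspond to larger types under componentwise inclusion.

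Second, I would associate to each realized type $T$ the subset
\[
S(T) = \{(i,j)\in U : j\in T_i\}\subseteq U,
\]
and show that $S(T)$ is the vertex set of a face of the subdivision $\subpolytope{U}^h$. The condition $j\in T_i(y)$ precisely means that among all lifted points $(e_i,e_j,h(i,j))$ with $(i,j)\in U$, those with $j\in T_i$ are the ones simultaneously touched by the supporting hyperplane with slope vector determined by $y$, after reconciling the sign convention between the $\min$ in~\eqref{eq_tropical_hyperplanes} and the upper envelope convention from Section~\ref{sec_triangulation}. Consequently, the lifts of $S(T)$ span a face of the upper envelope, whose projection is the face of $\subpolytope{U}^h$ in question.

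Third, this correspondence reverses the face order: if the cell of $y$ lies in the closure of the cell of $y'$, then $T_i(y)\supseteq T_i(y')$ for every $i$, hence $S(T(y))\supseteq S(T(y'))$, and since faces of $\subpolytope{U}^h$ are ordered by inclusion of their vertex sets, we obtain the anti-isomorphism on the corresponding posets.

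I expect the main technical subtlety to lie in the boundary analysis: identifying which cells of the arrangement are bounded and matching them with interior faces of $\subpolytope{U}^h$. A cell is unbounded exactly when some coordinate $y_j$ can be perturbed to $\pm\infty$ while preserving the type; this happens precisely when the row $j$ of $U$ contributes nothing to $S(T)$, which is the condition for the corresponding face to lie in the boundary facet $\{y_j=0\}$ of $\subpolytope{U}$. The symmetric column-missing condition is automatically absent from the bounded cells because the degenerate tropical hyperplanes (arising from positions $(i,j)\notin U$) effectively restrict the arrangement to the subpolytope $\subpolytope{U}$. Together these yield Fink--Rinc\'on's extension of Develin--Sturmfels duality, from which the theorem follows.
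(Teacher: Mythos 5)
First, a framing remark: the paper does not prove this statement at all --- it is imported as a black box from \cite{develin_tropical_2004} and \cite{fink_stiefel_2015} --- so your proposal is really a reconstruction of the argument in those sources. The first three steps of your reconstruction are correct and are exactly how those sources proceed: the decomposition of $\mathbb{TP}^b$ by types, the identification of $S(T(y))$ with the face of the upper envelope exposed by the functional $(\alpha,y)$ with $\alpha_i=-\min_{(i,k)\in U}\{-h(i,k)+y_k\}$, and the reversal of the face order under inclusion of types.

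The gap is in your final step, which is unfortunately the one carrying the content of the theorem once $\subpolytope{U}$ is a \emph{proper} subpolytope of $\Delta_a\times\Delta_b$. You assert that a cell is unbounded precisely when some row is absent from $S(T)$, and that this matches the boundary faces of $\subpolytope{U}^h$. Both halves fail. On the arrangement side, testing only coordinate perturbations $y_j\to\pm\infty$ does not detect all unbounded cells; one must show that the recession cone $\{w: w_j\le w_k \text{ for all } i,\ j\in T_i,\ (i,k)\in U\}$ is trivial, and since these inequalities range only over $(i,k)\in U$ rather than over all $k$, covering every row no longer forces $w$ to be constant (it does in the Develin--Sturmfels full-product case, which is where your heuristic comes from). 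On the subdivision side, an interior face of $\subpolytope{U}^h$ must contain the root in addition to meeting every row and column (\Cref{prop_interiorfaces}), because $\subpolytope{U}$ has boundary facets not inherited from $\Delta_a\times\Delta_b$. A concrete counterexample to your criterion, for $\nu=ENEN$ and the canonical height $h(i,j)=ij$: the set $A=\{(0,2),(1,2),(2,0),(2,1)\}$, obtained by deleting the root from the $U$-tree $\{(0,0),(0,2),(1,2),(2,0),(2,1)\}$, meets every row and every column, yet it is a boundary face of $\TamComplex{U}$; correspondingly its cell in the arrangement, cut out by $y_1=y_0+2$ and $y_2\le y_0$ after normalizing $y_0=0$, is the unbounded ray $\{(0,2,y_2):y_2\le 0\}$. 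So the bounded/interior matching cannot be read off from row-coverage alone, and the recession-cone analysis of \cite[Section~4]{fink_stiefel_2015} that you replace with this heuristic is exactly the missing ingredient. (You would also need to address surjectivity --- that every interior face is realized as the type of some bounded cell --- which the sketch does not mention.)
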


    \begin{figure}[htb]
        \centering
        \input{figures/tropicalArrangementsEENEEN}
        \caption{Three arrangements $\arrangement$ of tropical hyperplanes dual to the fine mixed subdivisions in~\Cref{fig_fineSubdivisionsEENEEN}.}
        \label{fig_tropicalArrangementsEENEEN}
    \end{figure}

    \begin{example}[Example~\ref{ex_fineSubdivisionsEENEEN} continued]
    Figure~\ref{fig_tropicalArrangementsEENEEN} illustrates three arrangements $\arrangement$ of tropical hyperplanes (drawn at the plane $y_0=0$) induced by the height function $h:U\to \mathbb{R}$, defined by $h(i,j)=ij$ on each of the three stack sets $U$ shown (and equal to $-\infty$ outside $U$).

    By~\Cref{thm_noncrossing_triangulation} and~\Cref{ex_canonical_height_function}, $\subpolytope{U}^h$ is a triangulation of $\subpolytope{U}$ which is equal to the $U$-Tamari complex~$\TamComplex{U}$. 
    Applying the Cayley trick (see \Cref{cor_fine_mixed_subdivision}), the poset of interior faces of $\TamComplex{U}$ is isomorphic to the poset of interior faces of the fine mixed subdivision $\MixedSubdivision{U}$ of $\genPerm{U}$. 
    The three fine mixed subdivisions for the different $U$'s shown are illustrated in~\Cref{fig_fineSubdivisionsEENEEN}. 

    Comparing both figures, one can see that the poset of interior faces of the three fine mixed subdivisions in~\Cref{fig_fineSubdivisionsEENEEN} is anti-isomorphic to the poset of bounded faces of the corresponding arrangements of tropical hyperplanes in~\Cref{fig_tropicalArrangementsEENEEN}. 
    In particular, the graph of bounded edges of the arrangement~$\arrangement$ is a geometric realization of the $U$-Tamari lattice. 
    \end{example}

\section{The $U$-associahedron}\label{sec_Uassociahedron}

In general, we can combine~\Cref{thm_develin_sturmfels} and~\Cref{thm_noncrossing_triangulation} to obtain a tropical geometric realization of the~$U$-Tamari lattice which we now make more explicit. 

    \begin{definition}\label{def_U_associhedron}
        Let $U$ be a stack set and $\arrangement$ be the arrangement of inverted tropical hyperplanes described in Equation~\eqref{eq_tropical_hyperplanes} associated to a non-crossing height function $h:U\rightarrow \mathbb{R}$.
        The \defn{$U$-associahedron} $\Asso{U}(h)$ is the polyhedral complex of bounded cells induced by $\arrangement$. 
        
        For an increment vector $\delta$ with respect to a lattice path $\nu$, we define the \defn{alt $\nu$-associahedron} $\altAsso{\delta}{\nu}(h)$ as~$\Asso{U_{\delta,\nu}}(h)$. 
        To simplify notation, sometimes we omit~$h$ when it is clear from the context.  
    \end{definition}

    
    The following is an alternative definition of the $U$-associahedron in purely combinatorial terms.
    
    \begin{theorem}[{Cf.~\cite[Theorem~5.2]{ceballos_geometry_2019}}]
    \label{def_thm_U_associahedron}
        For a non-crossing height function $h$, the $U$-associahedron $\Asso{U}(h)$ is a polyhedral complex whose poset of cells is anti-isomorphic to the poset of interior faces of the $U$-Tamari complex. In particular:
        \begin{enumerate}
            \item Its vertices correspond to $U$-trees.
            \item Two vertices are connected by an edge if and only if the corresponding $U$-trees are related by rotation. 
            That is, the edge graph of $\Asso{U}(h)$ is the Hasse diagram of the $U$-Tamari lattice. 
        \end{enumerate}
    \end{theorem}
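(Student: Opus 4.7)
The plan is to obtain the theorem by chaining together three results already established in the paper: the tropical duality of Develin--Sturmfels / Fink--Rinc\'on (\Cref{thm_develin_sturmfels}), the identification of the regular subdivision $\subpolytope{U}^h$ with the $U$-Tamari complex (\Cref{thm_noncrossing_triangulation}), and the combinatorial description of the interior faces (\Cref{prop_interiorfaces}). Since $\Asso{U}(h)$ is defined as the polyhedral complex of bounded cells of $\arrangement$, what the theorem really asserts is a translation of the ``poset of bounded cells'' into the combinatorial language of $U$-covering faces.

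First, I would apply \Cref{thm_noncrossing_triangulation}\eqref{item_two_thm_noncrossing}: because $h$ is assumed non-crossing, the regular subdivision $\subpolytope{U}^h$ of the subpolytope $\subpolytope{U}\subseteq \Delta_a\times\Delta_b$ is exactly the $U$-Tamari complex $\TamComplex{U}$. Next I would invoke \Cref{thm_develin_sturmfels} (with $-\infty$ heights assigned to the grid points outside $U$, as in the preamble to that theorem): this gives an anti-isomorphism between the poset of bounded faces of $\arrangement$ and the poset of interior faces of $\subpolytope{U}^h = \TamComplex{U}$. Composing the two, the poset of cells of $\Asso{U}(h)$ is anti-isomorphic to the poset of interior faces of $\TamComplex{U}$, which is the main claim.

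Parts (1)--(3) then follow by unpacking what ``interior face'' means on both sides of the anti-isomorphism. For (1), \Cref{prop_interiorfaces} identifies the interior faces of $\TamComplex{U}$ with the $U$-covering faces (pairwise $U$-compatible subsets containing the root and hitting every row and column), so the cells of $\Asso{U}(h)$ are indexed by $U$-covering faces. For (2), since the anti-isomorphism reverses dimensions, the vertices of $\Asso{U}(h)$ correspond to the maximal interior faces of $\TamComplex{U}$; any facet of $\TamComplex{U}$ (a $U$-tree) is automatically a covering face, so the vertices are in bijection with $U$-trees. For (3), two vertices of $\Asso{U}(h)$ share an edge if and only if the two corresponding facets of $\TamComplex{U}$ share a codimension-one interior face; by \Cref{lem_flips_equal_rotation} this happens precisely when the two $U$-trees are related by a rotation, hence the edge graph is the Hasse diagram of $\Tam{U}$.

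The one step that needs a little care, and which I expect to be the main (minor) obstacle, is the interface with \Cref{thm_develin_sturmfels} in the degenerate regime: points $(i,j)$ outside $U$ are handled by setting $h(i,j) = -\infty$, which produces degenerate tropical hyperplanes, and one must check that the Develin--Sturmfels duality as extended by Fink--Rinc\'on still delivers an anti-isomorphism between bounded faces of $\arrangement$ and interior faces of the induced subdivision of $\subpolytope{U}$, rather than of the full $\Delta_a\times\Delta_b$. This is precisely the content of \cite[Section~4]{fink_stiefel_2015}, so the verification is a matter of citing the correct result; no new argument is required.
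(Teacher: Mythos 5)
Your proposal is correct and follows exactly the route of the paper's own (one-line) proof, which derives the theorem as a direct consequence of \Cref{thm_noncrossing_triangulation} via the tropical duality of \Cref{thm_develin_sturmfels}; you simply spell out the details the paper leaves implicit, including the use of \Cref{prop_interiorfaces} for part (1) and \Cref{lem_flips_equal_rotation} for part (3). No gaps.
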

    \begin{proof}
        This is a direct consequence of~\Cref{thm_noncrossing_triangulation} via the ``tropicalization" in~\Cref{thm_develin_sturmfels}.
    \end{proof}

\subsection{Defining inequalities}

    In the following two sections, we concentrate on more explicit descriptions of the $U$-associahedron. We provide explicit defining inequalities and the coordinates of its vertices in terms of the height function $h$. This is obtained by explicitly writing down the inequalities induced by the tropical hyperplanes in Equation~\eqref{eq_tropical_hyperplanes}. 
    
    \begin{definition}
        For $(i,j)\in U$ we define the polyhedron $\widetilde g(i,j)$ in $\mathbb R^{b+1}$ by
        \begin{align}
        \widetilde g(i,j) :=         \left\{
        y \in \mathbb R^{b+1} : 
        -h(i,j)+y_j \leq 
        -h(i,j')+y_{j'} 
        \text{ for } (i,j')\in U 
        \right\}.            
        \end{align}        
        This is the closure of the connected component of $\mathbb R^{b+1}\setminus H_i$ where the minimum of Equation~\eqref{eq_tropical_hyperplanes} is attained at $-h(i,j)+y_j$. 

        For a subset $A$ of $U$, define the polyhedron $\widetilde g(A)$ in $\mathbb R^{b+1}$ as
        \begin{align}
        \widetilde g(A) := \bigcap_{(i,j)\in A} \widetilde g(i,j).
        \end{align}
        If $\widetilde g(A)$ is non-empty then it contains the vector $(1,\dots, 1)$ in its lineality space. Intersecting with the hyperplane $y_0=0$, we obtain a polyhedral cell $g(A)$ in $\mathbb R^b$:
        \begin{align}
        g(A) := \widetilde g(A) \cap \{y_0=0\} = \bigcap_{(i,j)\in A} g(i,j), 
        \end{align}
        where $g(i,j):= \widetilde g(i,j) \cap \{y_0=0\}$.
    \end{definition}    

    The following two lemmas and corollary follow from the duality between the triangulation $\subpolytope{U}^h$ of~$\subpolytope{U}$ induced by a non-crossing height function $h$ and the arrangement of tropical hyperplanes~$\arrangement$~\cite{develin_tropical_2004,fink_stiefel_2015}.

    \begin{lemma}[{Cf. \cite[Lemma 5.4]{ceballos_geometry_2019}}]
        The polyhedron $\widetilde g(A)$ is non-empty if and only if $A$ is a pairwise $U$-compatible set, i.e. a face of the $U$-Tamari complex $\TamComplex{U}$.
    \end{lemma}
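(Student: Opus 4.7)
The proof is a direct application of the tropical-polyhedral duality already invoked in~\Cref{thm_develin_sturmfels}, combined with~\Cref{thm_noncrossing_triangulation}. The idea is to recognize $\widetilde g(A)$ as a cell in the polyhedral decomposition induced by $\arrangement$ and then transfer the question to the combinatorics of the regular subdivision $\subpolytope{U}^h$.

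Concretely, I would first observe that $\widetilde g(A)$ is (modulo the lineality direction $(1,\dots,1)$) the closed cell of the arrangement $\arrangement$ consisting of points $y$ whose \emph{tropical type}
\[
\mathrm{type}(y) := \Bigl\{ (i,j) \in U : -h(i,j) + y_j = \min_{(i,j')\in U}\bigl(-h(i,j')+y_{j'}\bigr) \Bigr\}
\]
contains $A$. Equivalently, $\widetilde g(A)$ is the intersection of closed regions cut out by the inverted tropical hyperplanes $H_i$ for each $i$ appearing in $A$, selecting for each such $i$ the sectors indexed by the $j$'s with $(i,j) \in A$. Thus $\widetilde g(A) \neq \emptyset$ iff some point of $\mathbb{TP}^b$ realizes a type containing $A$.

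Next, I would invoke the Develin--Sturmfels/Fink--Rinc\'on correspondence (the very duality underlying~\Cref{thm_develin_sturmfels}): the subsets of $U$ realized as tropical types of points of $\mathbb{TP}^b$ are precisely the vertex sets of faces of the regular subdivision $\subpolytope{U}^h$ of $\subpolytope{U}$. Equivalently, $A$ is contained in a realized type iff $A$ spans a face of $\subpolytope{U}^h$. Since $h$ is non-crossing, \Cref{thm_noncrossing_triangulation} identifies $\subpolytope{U}^h$ with $\TamComplex{U}$, whose faces are by definition the pairwise $U$-compatible subsets of $U$. Chaining these equivalences yields both implications of the lemma.

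The main technical subtlety, and essentially the only place care is needed, is that some of the $h(i,j)$ may equal $-\infty$ (for positions inside the bounding rectangle but outside the stack set~$U$), so that the corresponding inverted tropical hyperplanes are degenerate and the ambient polytope is the proper subpolytope $\subpolytope{U} \subsetneq \Delta_a \times \Delta_b$. This is exactly the generalized setting treated by Fink and Rinc\'on in~\cite{fink_stiefel_2015}, so the duality between non-empty cells of $\arrangement$ and faces of $\subpolytope{U}^h$ applies verbatim in our setting, and no new argument is required beyond correctly restricting to the indices appearing in $U$.
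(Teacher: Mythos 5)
Your proposal is correct and follows exactly the route the paper takes: the paper disposes of this lemma (together with the adjacent lemma and corollary) by appealing to the Develin--Sturmfels/Fink--Rinc\'on duality between the regular subdivision $\subpolytope{U}^h$ and the tropical arrangement $\arrangement$, combined with Theorem~\ref{thm_noncrossing_triangulation} identifying $\subpolytope{U}^h$ with $\TamComplex{U}$. You simply spell out the intermediate step (identifying $\widetilde g(A)$ with the locus of points whose tropical type contains $A$, and realized types with faces of the subdivision) that the paper leaves implicit, including the correct handling of the degenerate hyperplanes coming from the $-\infty$ heights.
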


    \begin{lemma}[{Cf. \cite[Lemma 5.5]{ceballos_geometry_2019}}]\label{lem_defining_inequalities}
        For each $U$-covering face $A$, $g(A)$ is a (bounded) convex polytope of dimension $a+b+1-|A|$, whose vertices correspond to the $U$-trees containing $A$:
        \[
        g(A) = \conv \left\{
        g(T): \ T \text{ is a $U$-tree containing } A  
        \right\}.                    
        \]
    \end{lemma}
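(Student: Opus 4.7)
The plan is to derive this lemma as a direct consequence of the tropical duality machinery already set up in the paper, rather than arguing from scratch with the defining inequalities. Concretely, I would combine \Cref{thm_noncrossing_triangulation} (which identifies $\subpolytope{U}^h$ with $\TamComplex{U}$ when $h$ is non-crossing) with \Cref{thm_develin_sturmfels} (the Develin--Sturmfels/Fink--Rinc\'on anti-isomorphism between interior faces of $\subpolytope{U}^h$ and bounded faces of the arrangement $\arrangement$), using the previous lemma to get non-emptiness.

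The first step is to recognize that by construction $\widetilde g(A)$ is the closure of the union of connected components of $\mathbb{R}^{b+1}\setminus \arrangement$ whose label at coordinate $i$ is $j$ for each $(i,j)\in A$; equivalently, $g(A)$ is precisely the bounded cell of the tropical arrangement $\arrangement$ that the anti-isomorphism of \Cref{thm_develin_sturmfels} assigns to the interior face $A$ of $\TamComplex{U}$ (it is an interior face thanks to \Cref{prop_interiorfaces}, since $A$ is a $U$-covering face). For boundedness, I would observe that the $U$-covering condition (at least one element per row and column, including the root) forces the intersection of the halfspaces $\widetilde g(i,j)$ for $(i,j)\in A$ to have lineality space exactly $\mathbb{R}(1,\dots,1)$, so that $g(A)=\widetilde g(A)\cap\{y_0=0\}$ is bounded.

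From the anti-isomorphism, the dimension of $g(A)$ equals $\dim \subpolytope{U} - \dim A = (a+b) - (|A|-1) = a+b+1-|A|$, giving the claimed dimension. Dually, the vertices of $g(A)$ correspond to the maximal cells of $\arrangement$ refining $g(A)$, which under the anti-isomorphism correspond to the facets of $\TamComplex{U}$ containing $A$, i.e.\ the $U$-trees $T\supseteq A$. In particular, each such $T$ produces a vertex $g(T)$ of $g(A)$, and since $g(A)$ is a bounded convex polytope it equals the convex hull of its vertices, establishing the displayed formula.

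The main obstacle I anticipate is the boundedness claim: one must verify that $g(A)$ is bounded for every $U$-covering $A$, not just for full $U$-trees. I would handle this by arguing that the $U$-covering property, together with the tropical hyperplane picture in \Cref{fig_tropical_region_labels}, ensures that for every direction $e_j$ with $0\le j \le b$ there exist indices $i,i'$ and inequalities in $\widetilde g(A)$ forcing both $y_j-y_{j'}\le c$ and $y_j-y_{j'}\ge c'$ for suitable $j'$, so no unbounded direction survives after quotienting by $(1,\dots,1)$. Everything else is a formal consequence of the already-established duality and of \Cref{prop_interiorfaces}, so the dimension and vertex statements should follow cleanly once the identification of $g(A)$ with the bounded cell of $\arrangement$ is in place.
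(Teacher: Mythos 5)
Your proposal is correct and takes essentially the same route as the paper, which proves this lemma (and the adjacent ones) simply by asserting that they ``follow from the duality'' of Develin--Sturmfels and Fink--Rinc\'on between the regular triangulation $\subpolytope{U}^h$ and the arrangement $\arrangement$, combined with \Cref{thm_noncrossing_triangulation} and \Cref{prop_interiorfaces}. Your write-up merely makes explicit what the paper leaves to the citation: the identification of $g(A)$ with the bounded dual cell of the interior face $A$, the complementary-dimension count, and the boundedness coming from the $U$-covering condition.
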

    \begin{corollary}[{Cf. \cite{ceballos_geometry_2019}}]\label{cor_associahedron_faces}
        The $U$-associahedron is the polyhedral complex
        \[
            \Asso{U}(h) = \left\{
                g(A):\, A \text{ is a } U\text{-covering face}
            \right\}.
        \]
    \end{corollary}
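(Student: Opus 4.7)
The plan is to combine \Cref{def_thm_U_associahedron}, which characterizes the face poset of $\Asso{U}(h)$ combinatorially, with \Cref{lem_defining_inequalities}, which realizes each $U$-covering face $A$ as an explicit bounded polytope $g(A)$. Essentially, the corollary just reorganizes information already established: \Cref{def_thm_U_associahedron} tells us that the cells of $\Asso{U}(h)$ are indexed by $U$-covering faces, and \Cref{lem_defining_inequalities} tells us what those cells look like.

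First, I would recall the dictionary behind the Develin--Sturmfels / Fink--Rinc\'on duality in \Cref{thm_develin_sturmfels}. Each closed region of $\mathbb R^{b+1}\setminus H_i$ is, by construction, the locus where the minimum in Equation~\eqref{eq_tropical_hyperplanes} is attained at a fixed term $-h(i,j)+y_j$, namely $\widetilde g(i,j)$. A cell of the polyhedral decomposition induced by $\arrangement$ is obtained by choosing, for each column $i$ containing a point of $U$, a non-empty set of row indices $j$ on which the minimum is realized and intersecting the corresponding $\widetilde g(i,j)$; modulo the lineality $(1,\dots,1)$, which is quotiented out by $\{y_0=0\}$, this cell is exactly $g(A)$ for the subset $A\subseteq U$ of chosen indices.

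Second, I would apply the preceding two results to restrict to bounded cells. \Cref{lem_defining_inequalities} says $g(A)$ is non-empty precisely when $A$ is $U$-compatible, and is a bounded polytope precisely when $A$ is a $U$-covering face. Thus the bounded cells of $\arrangement$ are exactly $\{g(A) : A \text{ a $U$-covering face}\}$. Finally, one must check that this identification $A\leftrightarrow g(A)$ matches the anti-isomorphism of face posets asserted in \Cref{def_thm_U_associahedron}; this is immediate since \Cref{def_thm_U_associahedron} is itself obtained via the same duality, so inclusion of $A$'s corresponds to reverse inclusion of $g(A)$'s.

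The only subtlety, which is the main obstacle, is the bookkeeping in the degenerate case where some values $h(i,j)$ are $-\infty$ for positions $(i,j)$ lying in the ambient $a\times b$ rectangle but outside $U$. In that case, the corresponding term is simply absent from Equation~\eqref{eq_tropical_hyperplanes}, so the associated tropical hyperplane $H_i$ is a degenerate tropical hyperplane in the sense of~\cite{fink_stiefel_2015}; one must make sure the bounded/unbounded dichotomy still matches the $U$-covering condition (``at least one element per row and per column''). This is precisely the point covered by \Cref{lem_defining_inequalities} in our setting, so no new argument is needed beyond unpacking it.
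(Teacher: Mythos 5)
Your proposal is correct and follows essentially the same route as the paper, which gives no separate proof of this corollary beyond the remark that it (together with the two preceding lemmas) follows from the Develin--Sturmfels/Fink--Rinc\'on duality; you simply unpack that duality by identifying the cells of $\arrangement$ with the sets $g(A)$ and then invoking Lemma~\ref{lem_defining_inequalities} and Theorem~\ref{def_thm_U_associahedron} to single out the bounded ones as those indexed by $U$-covering faces.
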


\subsection{Coordinates of the vertices}

    The defining inequalities of the faces of the $U$-associahedron in~\Cref{lem_defining_inequalities} and~\Cref{cor_associahedron_faces} can be used to give a precise formula for the coordinates of its vertices. For this, the following lemma is useful. 

    \begin{lemma}\label{lem_vertexcoordinates_one}
        Let $T$ be a $U$-tree and $g(T)=(y_1,\dots,y_b)$ be the coordinate of its corresponding vertex of the $U$-associahedron~$\Asso{U}(h)$. 
        If $(i,j),(i,j')\in T$ for some $i$ and $j\neq j'$ then 
        \begin{align}
            y_{j}-y_{j'}=h(i,j)-h(i,j').
        \end{align}
    \end{lemma}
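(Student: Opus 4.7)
The plan is to unpack the definitions directly: the vertex $g(T)$ lies in $\widetilde g(T) \cap \{y_0=0\}$, so every inequality defining each $\widetilde g(i,j)$ with $(i,j)\in T$ is satisfied at $g(T)$. When two nodes of $T$ share a column, the inequalities coming from these two nodes point in opposite directions and must therefore hold with equality.

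More precisely, suppose $(i,j)$ and $(i,j')$ both belong to $T$. Since $(i,j)\in T$, the definition of $\widetilde g(i,j)$ together with the fact that $(i,j')\in U$ gives, evaluated at $g(T)=(0,y_1,\dots,y_b)$,
\begin{align*}
    -h(i,j)+y_j \;\leq\; -h(i,j')+y_{j'}.
\end{align*}
Symmetrically, since $(i,j')\in T$ and $(i,j)\in U$, the defining inequality of $\widetilde g(i,j')$ yields
\begin{align*}
    -h(i,j')+y_{j'} \;\leq\; -h(i,j)+y_j.
\end{align*}
Combining the two inequalities gives $-h(i,j)+y_j = -h(i,j')+y_{j'}$, i.e.\ $y_j-y_{j'}=h(i,j)-h(i,j')$, as claimed. (If $j=0$ or $j'=0$, the identity still holds with the convention $y_0=0$.)

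There is essentially no obstacle here: once the definitions of $\widetilde g(i,j)$ and $g(T)$ are in place, the statement is a two-line consequence of using the $T$-inequalities in both directions. The only subtlety is making sure the indices $(i,j)$ and $(i,j')$ are allowed on both sides of the defining inequalities, which holds because $T\subseteq U$ and the inequalities defining $\widetilde g(i,j)$ range over all $(i,j')\in U$.
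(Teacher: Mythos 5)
Your argument is exactly the paper's: since $g(T)$ lies in $g(i,j)\cap g(i,j')$, the two defining inequalities of $\widetilde g(i,j)$ and $\widetilde g(i,j')$ point in opposite directions and force equality. The proposal is correct and matches the paper's proof, with slightly more explicit bookkeeping about why both indices are admissible in the defining inequalities.
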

    \begin{proof}
        Since $(y_1,\dots,y_b)\in g(i,j) \cap g(i,j')$ then 
        \begin{align*}
            -h(i,j)+y_j \leq   -h(i,j')+y_{j'} \\ 
            -h(i,j')+y_{j'} \leq   -h(i,j)+y_{j}. 
        \end{align*}
        From this we deduce that both inequalities are equalities, and so $y_{j}-y_{j'}=h(i,j)-h(i,j')$.
    \end{proof}

    Recall that a point $(i,j)\in U$ is the point located at column $i$ and row $j$, where the row labels $0,\dots,a$ increase from left to right and the column labels $0,\dots,b$ increase from top to bottom. 
    
    Let $T$ be a $U$-tree and $1\leq k \leq b$.
    We denote by $p_k(T)\in T$ the left most node of $T$ in row $k$, 
    and by~$p_k'(T)\in T$ the right most node in row 0 in the unique path from $p_k(T)$ to the root in $T$. 
    As illustrated in~\Cref{fig_path_to_root},
    the unique path from $p_k(T)$ to $p_k'(T)$ in $T$ (shown in color blue) consists of a sequence of up vertical runs and left horizontal runs that alternate. 
    We denote by $P_k^+(T)$ the \defn{set of bottom nodes}, and by $P_k^-(T)$ the \defn{set of top nodes}, of the vertical runs; these are labeled $+$ and $-$, respectively, in~\Cref{fig_path_to_root}. 
    And we let $P_k(T)=P_k^+(T)\sqcup P_k^-(T)$.    

    \begin{figure}[htb]
        \centering
        \input{figures/path_to_root}
        \caption{The set $P_k^+(T)$ is the set of nodes labeled $+$ in the figure, while~$P_k^-(T)$ is the set of nodes labeled $-$. The blue path is the unique path from the left most node of $T$ in row $k$ to the top of the shape.}
        \label{fig_path_to_root}
    \end{figure}

    \begin{proposition}[{Cf. \cite[Lemma 5.6]{ceballos_geometry_2019}}]\label{prop_vertexcoordinates}
        Let $T$ be a $U$-tree and $g(T)=(y_1,\dots,y_b)$ be the coordinate of its corresponding vertex of the $U$-associahedron~$\Asso{U}(h)$. Then 
        \begin{align}\label{eq_vertex_as_alternating_sum}
        y_k= \sum_{(i,j)\in P_k(T)} \pm h(i,j),            
        \end{align}
        where the sign is positive if $(i,j)\in P_k^+(T)$ and negative if $(i,j)\in P_k^-(T)$.
    \end{proposition}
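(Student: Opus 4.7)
The plan is to apply Lemma~\ref{lem_vertexcoordinates_one} to each vertical run of the path from $p_k(T)$ to $p_k'(T)$ and observe that the resulting identities telescope to the formula for $y_k$.

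First, I would label the maximal vertical runs of this path from bottom to top as $(i_s,j_s^+)\to(i_s,j_s^-)$ for $s=1,\dots,\ell$, with $(i_s,j_s^+)$ the bottom and $(i_s,j_s^-)$ the top of the $s$-th run. By definition, $P_k^+(T)=\{(i_s,j_s^+):1\le s\le\ell\}$ and $P_k^-(T)=\{(i_s,j_s^-):1\le s\le\ell\}$. Since $p_k(T)$ is the leftmost node of $T$ in row $k$, the path cannot begin with a leftward edge and therefore starts with a vertical run, so $j_1^+=k$. Since $p_k'(T)$ is the first row-$0$ node reached along the path from $p_k(T)$, the path ends with a vertical run and $j_\ell^-=0$. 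Finally, two consecutive vertical runs are joined by a horizontal run lying in one common row, so $j_s^-=j_{s+1}^+$ for $1\le s<\ell$.

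Next, Lemma~\ref{lem_vertexcoordinates_one} applied to the two nodes $(i_s,j_s^+),(i_s,j_s^-)\in T$ sharing column $i_s$ gives
\[ y_{j_s^+}-y_{j_s^-} \;=\; h(i_s,j_s^+)-h(i_s,j_s^-). \]
Summing over $s=1,\dots,\ell$ and using $j_s^-=j_{s+1}^+$, the left-hand side telescopes to $y_{j_1^+}-y_{j_\ell^-}=y_k-y_0$, and $y_0=0$ by the normalization $g(T)=\widetilde g(T)\cap\{y_0=0\}$. The right-hand side equals $\sum_{(i,j)\in P_k^+(T)} h(i,j) - \sum_{(i,j)\in P_k^-(T)} h(i,j)$, which is exactly the signed sum in the statement.

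The argument is essentially a bookkeeping exercise and I do not anticipate a substantial obstacle. The only genuinely non-trivial input is the alternating up/left structure of the path, and in particular that it begins and ends with a vertical run; both facts follow directly from the tree edge structure of $T$ together with the definitions of $p_k(T)$ and $p_k'(T)$. The horizontal runs contribute no terms to the formula and play only a connective role, which is precisely what makes the telescoping work.
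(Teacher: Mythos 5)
Your proposal is correct and follows essentially the same route as the paper: apply Lemma~\ref{lem_vertexcoordinates_one} to the two endpoints of each vertical run of the path from $p_k(T)$ to $p_k'(T)$ and telescope, using $y_0=0$. The only difference is notational (your $(i_s,j_s^\pm)$ versus the paper's $j_1\geq\dots\geq j_{\ell+1}$ indexing), so there is nothing substantive to add.
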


    \begin{proof}
        Let $\ell$ be the number of up vertical runs of the path from $p_k(T)$ to $p_k'(T)$ in $T$. These vertical runs alternate with $\ell-1$ left horizontal runs, see~\Cref{fig_path_to_root}. 

        Let $(i_1,j_1),\dots , (i_\ell,j_\ell)\in P_k^+(T)$ be the elements of $P_k^+(T)$ ordered in ``decreasing order'' 
        \begin{align*}
         k=&j_1\geq j_2 \geq \dots \geq j_\ell >0\\
         &i_1\geq i_2\geq \dots \geq i_\ell
        \end{align*}
        Therefore, $P_k^-(T)=\{(i_1,j_2),(i_2,j_3)\dots , (i_\ell,j_{\ell+1})\}$ where $j_{\ell+1}=0$. 

        By~\Cref{lem_vertexcoordinates_one}, we have that
        \begin{align*}
            y_{j_1} - y_{j_2} &= h(i_1,j_1) - h(i_1,j_2) \\
            y_{j_2} - y_{j_3} &= h(i_2,j_2) - h(i_2,j_3) \\
            &\vdots \\
            y_{j_\ell} - y_{j_{\ell+1}} &= h(i_\ell,j_\ell) - h(i_\ell,j_{\ell+1}) 
        \end{align*}
        Adding up, we get 
        \[
        y_k=y_k-y_0= \sum_{(i,j)\in P_k(T)} \pm h(i,j)
        \]
        where the sign is positive if $(i,j)\in P_k^+(T)$ and negative if $(i,j)\in P_k^-(T)$, as desired. 
    \end{proof}

\section{A canonical realization}\label{sec_cononial_realization}

    We now have all the ingredients to provide a beautifully simple realization of the \mbox{$U$-associahedron} for any stack set $U$. 
    If $U$ is left aligned, this gives an explicit canonical realization of a $\nu$-associahedron~\cite{ceballos_geometry_2019}, which was not explicitly described before. 
    For a general $U$, our construction gives a geometric realization of an alt $\nu$-associehedron, whose edge graph is the Hasse diagram of the alt~$\nu$-Tamari lattice introduced by Ceballos and Chenevière in~\cite{ceballos_cheneviere_linear_2024}. This is the first known geometric realization of the alt $\nu$-Tamari lattice as the edge graph of a polytopal complex.
    
    Let $U$ be a stack set.  The \defn{canonical height function} is the map defined by
    \begin{align}
        h:U&\rightarrow \mathbb{R} \\
        h(i,j)&=ij
    \end{align}
    It is easy to check that this is a non-crossing height function in the sense of~\Cref{def_height_function}. Therefore, it induces a geometric realization of the $U$-associahedron~$\Asso{U}(h)$, see~\Cref{def_thm_U_associahedron}.
    It turns out that for this specific choice of height function, the coordinates of the vertices become elegant and simple. 

    \begin{figure}[htb]
        \centering
        \input{figures/path_to_root_area}
        \caption{Area below the path $R_k(T)$.}
        \label{fig_path_to_root_area}
    \end{figure}

    \begin{theorem}\label{thm_canonical_realization}
        Let $U$ be a stack set and $h:U\rightarrow \mathbb{R}$ be the canonical height function $h(i,j)=ij$. The coordinate $g(T)=(y_1,\dots,y_k)$ of a $U$-tree $T$ in the $U$-associahedron~$\Asso{U}(h)$ is determined by
        \[
        y_k(T)=\area(R_k(T)),
        \]
        where $R_k(T)$ is the path that connects the left most node of $T$ in row $k$ to the root, and the area of a lattice path $R$ is the number of boxes below it in the smallest rectangle containing it. 
    \end{theorem}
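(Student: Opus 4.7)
The plan is to substitute the canonical height function $h(i,j)=ij$ into the explicit vertex-coordinate formula provided by~\Cref{prop_vertexcoordinates}, and then identify the resulting alternating sum with a row-by-row tally of the boxes below the path $R_k(T)$. Using the notation of that proposition, write $p_k(T)=(i_1,j_1)$ with $j_1=k$, and let
\[
P_k^+(T)=\{(i_s,j_s):1\le s\le \ell\}, \qquad P_k^-(T)=\{(i_s,j_{s+1}):1\le s\le \ell\},
\]
with $j_{\ell+1}=0$, $i_1>\cdots>i_\ell$, and $j_1>\cdots>j_\ell>0$. Substituting $h(i,j)=ij$ into~\eqref{eq_vertex_as_alternating_sum} and grouping terms by $s$ (while absorbing the vanishing last term $i_\ell j_{\ell+1} = 0$), I get
\[
y_k(T) = \sum_{s=1}^{\ell} i_s j_s - \sum_{s=1}^{\ell} i_s j_{s+1} = \sum_{s=1}^{\ell} i_s\,(j_s-j_{s+1}).
\]

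To finish, I would show that the right-hand side is exactly $\area(R_k(T))$. The path $R_k(T)$ goes from the root $(0,0)$ to $p_k(T)=(i_1,j_1)$ using east and south steps, alternating east runs along rows $0,j_\ell,\ldots,j_2$ with south runs on the columns $i_\ell,i_{\ell-1},\ldots,i_1$; hence its smallest bounding rectangle is $[0,i_1]\times[0,j_1]$. I would then decompose the area row by row: for each $s\in\{1,\ldots,\ell\}$ and each row $r$ with $j_{s+1}\le r<j_s$, the $s$-th vertical run of $R_k(T)$ lies on the column line $x=i_s$, so the unit squares strictly below the path in row $r$ are precisely those in columns $0,1,\ldots,i_s-1$, contributing exactly $i_s$ boxes. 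Summing the contributions over all rows yields
\[
\area(R_k(T)) = \sum_{s=1}^{\ell} i_s\,(j_s-j_{s+1}) = y_k(T),
\]
as desired.

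The only real obstacle is bookkeeping with conventions and boundary cases: our labelling has rows increasing downward, so ``below the path'' is the southwest region of the bounding rectangle; the convention $j_{\ell+1}=0$ absorbs the initial east run from $(0,0)$ to $(i_\ell,0)$ along row $0$ into the $s=\ell$ summand $i_\ell j_\ell$; and one must verify that the vertical runs of $R_k(T)$ at columns $i_1,\ldots,i_\ell$ partition the rows $\{0,1,\ldots,j_1-1\}$ of the bounding rectangle, so that the row-by-row count covers every box of the southwest region exactly once. Once these conventions are pinned down, the identification of the two expressions is immediate.
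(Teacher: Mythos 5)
Your proposal is correct and follows essentially the same route as the paper: substitute the canonical height function $h(i,j)=ij$ into the alternating-sum formula of Proposition~\ref{prop_vertexcoordinates}, telescope to obtain $\sum_{s} i_s(j_s-j_{s+1})$, and identify this with $\area(R_k(T))$ (the paper does this identification by pointing at Figure~\ref{fig_path_to_root_area}, whereas you spell out the row-by-row box count explicitly, which is a harmless elaboration).
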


    \begin{proof}
        The proof is a direct consequence of~\Cref{prop_vertexcoordinates} and is illustrated in~\Cref{fig_path_to_root_area}. 
        Rewritting Equation~\eqref{eq_vertex_as_alternating_sum} for the canonical height function we get 
        \[
        y_k = (i_1j_1-i_1j_2)+(i_2j_2-i_2j_3)+\dots+(i_\ell j_\ell-0).
        \]
        This is precisely, the area below the path $R_k(T)$, which is the colored region in~\Cref{fig_path_to_root_area}.
    \end{proof}

    We call the realization of the $U$-associahedron in~\Cref{thm_canonical_realization} the \defn{canonical realization}.

\begin{example}[The coordinate of a $\nu$-tree]
    Let $\nu=(2,1,2,1,0)$, and $\mu=(0,2,0,3,1)$ be a $\nu$-path. The corresponding $\nu$-tree $T$ is shown in Figure~\ref{fig_vtree_coordinate}.
    The areas below the paths $R_1(T),R_2(T),R_3(T)$ and $R_4(T)$ are colored red, blue, green and purple respectively. Counting the number of boxes in these areas we get the coordinates of the vector $g(T)=(1,6,3,5)$. 
\end{example}

\begin{figure}[h]
    \centering
    \input{figures/vtree_coordinate}
    \caption{The coordinate of the shown $\nu$-tree $T$ is $g(T)=(y_1,y_2,y_3,y_4)=(1,6,3,5)$.}
    \label{fig_vtree_coordinate}
\end{figure}

\begin{example}[The associahedron]
The classical $n$-dimensional associahedron is obtained for \mbox{$\nu=(EN)^n$}.
Our canonical realization for $n=2$ is shown in Figure~\ref{fig_associahedron2D_paths_trees}.
The computation of the coordinates for the five trees is shown in Figure~\ref{fig_vtree_coordinate_asso2}. The coordinates are:
\[
\begin{array}{ccccc}
(0,0),
&
(0,1),
&
(1,2),
&
(2,2),
&
(2,0).
\end{array}
\]
\begin{figure}[h]
    \centering
    \input{figures/vtree_coordinate_asso2}
    \caption{The coordinates of the vertices of the canonical realization of the \mbox{2-dimensional} associahedron are: $(0,0),(0,1),(1,2),(2,2),(2,0)$.}
    \label{fig_vtree_coordinate_asso2}
\end{figure}

\begin{figure}[htb]
    \centering
    \input{figures/associahedronENEN_paths_trees}
    \caption{The 2-dimensional associahedron.}
    \label{fig_associahedron2D_paths_trees}
\end{figure}

Our canonical realization for $n=3$ is shown in Figure~\ref{associahedronENENEN_paths_trees}. 
The computation of the coordinates for the 14 trees for $n=3$ is shown in Figure~\ref{fig_vtree_coordinate_asso3}. The coordinates are:
\[
\begin{array}{ccccccc}
(0,0,0)&
(0,0,1)&
(0,1,2)&
(0,2,2)&
(1,3,3)&
(2,4,3)&
(3,4,3)
\\
(0,2,0)&
(2,4,0)&
(3,4,0)&
(3,0,0)&
(3,0,1)&
(3,2,3)&
(1,2,3)
\end{array}
\]
\begin{figure}[h]
    \centering
    \input{figures/vtree_coordinate_asso3}
    \caption{The coordinates of the vertices of the canonical realization of the \mbox{3-dimensional} associahedron.}
    \label{fig_vtree_coordinate_asso3}
\end{figure}

\end{example}

\begin{figure}[htb]
    \centering
    \input{figures/associahedronENENEN_paths_trees}
    \caption{The 3-dimensional associahedron.}
    \label{associahedronENENEN_paths_trees}
\end{figure}

\begin{example}[The Fuss-Catalan associahedron]
    For $m,n\in \mathbb{N}$, the $n$-dimensional $m$-associahedron is obtained for $\nu=(E^mN)^n$. Our canonical realizations for $(m,n)=(2,2)$ and $(m,n)=(3,2)$ are shown in Figure~\ref{fig_Fuss_associahedron2D_m2} and Figure~\ref{fig_Fuss_associahedron2D_m3}, respectively.
    The coordinates of the vertices are special cases Formula~\eqref{eq_coord2D_nuAsso} in the more general example below. 
\end{example}

\begin{figure}[htb]
    \centering
    \input{figures/associahedronEENEEN_paths_trees}
    \caption{The Fuss-Catalan associahedron for $(m,n)=(2,2)$.}
    \label{fig_Fuss_associahedron2D_m2}
\end{figure}

\begin{figure}[htb]
    \centering
    \input{figures/associahedronEEENEEEN_paths_trees}
    \caption{The Fuss-Catalan associahedron for $(m,n)=(3,2)$.}
    \label{fig_Fuss_associahedron2D_m3}
\end{figure}

\begin{example}[Two dimensional $\nu$-associahedra] 
For $a,b\in \mathbb{N}$ and $\nu=E^aNE^bN$, a schematic illustration of the $\nu$-associahedron is shown in Figure~\ref{fig_associahedron_nu_2D}.
For this case, the $\nu$-paths are of the form $\mu=(r,s,a+b-r-s)$ where $r\leq a$ and $r+s\leq a+b$. 
The coordinates of the $\nu$-tree $T$ corresponding to $\mu$ are explicitly given by 
\begin{equation}\label{eq_coord2D_nuAsso}
    g(T) = 
\begin{cases}
    (a-r+b-s,2a-2r+b-s), & \text{for }  s \geq b \\
    (a+b-s,2a-2r), & \text{for }  s < b 
  \end{cases}
\end{equation}

In the first case, the parent of the left most node at height 0 is located at height 1, while in the second case the parent is located at height 2. Computing the areas below the paths $R_1(T)$ and $R_2(T)$ gives the desired formula.
\end{example}

\begin{figure}[htb]
    \centering
    \input{figures/associahedron_nu_2D}
    \caption{Schematic figure of the 2-dimensional $\nu$-associahedron for $\nu=E^aNE^bN$. In this case, $a=4$ and $b=5$.}
    \label{fig_associahedron_nu_2D}
\end{figure}

\begin{example}[Alt $\nu$-associahedra for $\nu=ENEEN$]
    For $\nu=ENEEN=(1,2,0)$, there are three possible choices for the increment vector $\delta$: $(2,0),(1,0)$ and $(0,0)$. 
    The canonical realization of the corresponding alt $\nu$-associahedra~$\altAsso{\delta}{\nu}$ are illustrated in Figure~\ref{fig_alt_associahedronENEEN_delta012}.
\end{example}    

\begin{figure}[p]
    \centering
    \input{figures/associahedronENEEN_delta2}
    \bigskip
    
    \input{figures/associahedronENEEN_delta1}
    \bigskip
    
    \input{figures/associahedronENEEN_delta0}
    \caption{Alt $\nu$ associahedra for $\nu=(1,2,0)$ and three different choices of increment vector $\delta$. The top is $\delta=(2,0)$, the middle is $\delta=(1,0)$, and the bottom is $\delta=(0,0)$.}
    \label{fig_alt_associahedronENEEN_delta012}
\end{figure}

\begin{example}[Alt $\nu$-associahedra for $\nu=EENEEN$]
    For $\nu=EENEEN=(2,2,0)$, there are three possible choices for the increment vector $\delta$: $(2,0),(1,0)$ and $(0,0)$.
    The case $\delta=(2,0)$ was illustrated in Figure~\ref{fig_Fuss_associahedron2D_m2}. The remaining two cases, $\delta=(1,0)$ and $\delta=(0,0)$, are shown in Figure~\ref{fig_alt_associahedronEENEEN_delta012}.
\end{example}    

\begin{figure}[p]
    \centering
    \input{figures/associahedronEENEEN_delta1}
    \bigskip
    
    \input{figures/associahedronEENEEN_delta0}
    \caption{Alt $\nu$ associahedra for $\nu=(2,2,0)$ and two different choices of increment vector $\delta$. The top is $\delta=(1,0)$ and the bottom is $\delta=(0,0)$. The case $\delta=(2,0)$ is the Fuss-Catalan associahedron in Figure~\ref{fig_Fuss_associahedron2D_m2}.}
    \label{fig_alt_associahedronEENEEN_delta012}
\end{figure}

\begin{example}[Some 3-dimensional alt $\nu$-associahedra]
Some examples of 3-dimensional alt $\nu$-associahedra are illustrated in~\Cref{fig_nuTamari_nDyck_ENEENN,fig_alt_nu_tamari_ENEENN_delta_one}.
Further examples are presented in~\Cref{sec_3D_examples}.
\end{example}




\section{From Loday to our canonical realization}\label{sec_Loday_to_canonical}
In this section, we show that Loday's associahedron is equivalent to our canonical realization of the associahedron via a simple affine transformation. 

As described in the introduction,~\Cref{sec_intro}, the coordinates of Loday's $n$-dimensional associahedron can be elegantly described as follows. Let $T$ be a plane binary tree with $n+1$ internal nodes. 
Loday's coordinate $L(T)=(\ell_1,\dots,\ell_{n+1})$ associated to $T$ satisfies 
$
\ell_i = a_ib_i,
$
where $a_i$ is the number of left descendant leaves of the $i$th internal node of $T$ in in-order, and $b_i$ is the number of its right descendant leaves. 
The coordinates of Loday's 2-dimensional associahedron are computed in~\Cref{fig_coordinates_Loday_2D}, and the polytope is illustrated on the left of~\Cref{fig_associahedron_Loday_canonical_2D}. 

\begin{figure}[htb]
    \centering
    \input{figures/coordinates_Loday_2D}
    \caption{Coordinates of the vertices of Loday's 2-dimensional associahedron.}
    \label{fig_coordinates_Loday_2D}
\end{figure}

For our canonical realization we draw the plane binary trees $T$ with $n+1$ internal nodes using lattice points in a staircase of size $n+1$ (i.e. $T$ is a $\nu$-tree for $\nu=(NE)^{n+1}$). The canonical coordinate $C(T)=(c_n,\dots,c_1)$ of $T$ is defined by $c_i=\area(T_i)$, where $T_i$ is the lattice path from the left most node in $T$ at height $i$ to the root, and $\area(T_i)$ is the number of boxes below this path in the smallest rectangle containing it. 
The coordinates of our canonical 2-dimensional associahedron are computed in~\Cref{fig_coordinates_canonical_2D}, and the polytope is illustrated on the right of~\Cref{fig_associahedron_Loday_canonical_2D}. 

\begin{figure}[htb]
    \centering
    \input{figures/coordinates_canonical_2D}
    \caption{Coordinates of the vertices of our canonical realization of the 2-dimensional associahedron.}
    \label{fig_coordinates_canonical_2D}
\end{figure}

\begin{figure}[htb]
    \centering
    \input{figures/associahedron_Loday_2D}
    \qquad
    \input{figures/associahedron_canonical_2D}
    \caption{The 2-dimensional associahedron: Loday's realization (left) and our canonical realization (right). They are related by the map $(\ell_1,\ell_2,\ell_3) \to (c_2,c_1)$ where $c_1=\ell_1-1$ and $c_2=\ell_1+\ell_2-(1+2)$.}
    \label{fig_associahedron_Loday_canonical_2D}
\end{figure}

\begin{theorem}\label{thm_Loday_to_Ceballos}
    Loday's associahedron and our canonical realization of the associahedron are related by the simple affine transformation
    \begin{align*}
      \varphi:\mathbb{R}^{n+1}&\to \mathbb{R}^n \\
      \varphi(\ell_1,\dots,\ell_{n+1})&=(c_n,\dots,c_1)
    \end{align*}
    defined by $c_i=(\ell_1+\dots+\ell_i) - (1+\dots+i)$.
\end{theorem}

\begin{proof}
    We need to show that $\varphi(L(T))=C(T)$ for every plane binary tree $T$. This follows if we prove the following two properties:
    \begin{enumerate}
        \item $\varphi(L(T_0))=C(T_0)$ where $T_0$ be the bottom element of the Tamari lattice.
        \item $\varphi(L(T'))-\varphi(L(T))=C(T')-C(T)$ if the tree $T'$ is obtained from a tree $T$ by applying a tree rotation. 
    \end{enumerate}

    For Property (1), note that $L(T_0)=(1,2,\dots,n+1)$ and $C(T_0)=(0,\dots,0)$. Applying the map $\varphi$ to the first, we obtain $\varphi(L(T_0))=C(T_0)$ as desired.

    For Property (2), consider two trees $T$ and $T'$ related by a tree rotation as illustrated in~\Cref{fig_proof_Loday_Ceballos}, where $a,b,c,d,e$ are the number of leaves of the tree in the denoted parts.
    Let $L(T)=(\ell_1,\dots,\ell_{n+1})$ and $L(T')=(\ell_1',\dots,\ell_{n+1}')$ be their corresponding coordinates in Loday's associahedron. 
    
    \begin{figure}[htb]
        \centering
        \input{figures/proof_Loday_Ceballos}
        \caption{The two trees related by rotation in the proof of~\Cref{thm_Loday_to_Ceballos}.}
        \label{fig_proof_Loday_Ceballos}
    \end{figure}

    Note that the entries of $L(T)$ and $L(T')$ differ only at two positions, corresponding to the two nodes~$p,q\in T$ and $p',q'\in T'$.
    More precisely, we have the following. 

    In the tree $T$, we have that 
    \begin{itemize}
        \item $q\in T$ is the $(a+b)$th internal node in in-order. Therefore, $\ell_{a+b}=bc$.
        \item $p\in T$ is the $(a+b+c)$th internal node in in-order. Therefore, $\ell_{a+b+c}=(b+c)d$.
    \end{itemize}

    In the tree $T'$, we have that 
    \begin{itemize}
        \item $p'\in T'$ is the $(a+b)$th internal node in in-order. Therefore, $\ell_{a+b}'=b(c+d)$.
        \item $q'\in T'$ is the $(a+b+c)$th internal node in in-order. Therefore, $\ell_{a+b+c}'=cd$.
    \end{itemize}

    As a consequence, we deduce that
    \begin{align}
        \ell_{a+b}'-\ell_{a+b} &= bd \\
        \ell_{a+b+c}'-\ell_{a+b+c} &=-bd.
    \end{align}

    Furthermore, $\ell_i'-\ell_i=0$ for all other indices.
    In other words, the vector $L(T')$ is obtained from the vector $L(T)$ by adding $bd$ to the $(a+b)$th entry and subtracting $bd$ to the $(a+b+c)$th entry. 

    If we denote by $\varphi(L(T))=(\widetilde c_n, \dots,\widetilde c_1)$ and by $\varphi(L(T'))=(\widetilde c_n', \dots,\widetilde c_1')$, 
    Then,
    \begin{equation}\label{eq_proof_Loday}
    \widetilde c_i' =
    \begin{cases}
        \widetilde c_i+bd, & \text{if } a+b \leq i \leq a+b+c-1 \\
        \widetilde c_i, & \text{otherwise.}
    \end{cases}                
    \end{equation}

    On the other hand, the canonical coordinates satisfy an analog formula.
    Let $C(T)=(c_n,\dots,c_1)$ and $C(T')=(c_n',\dots,c_1')$ be the coordinates of $T$ and $T'$ in our canonical realization.
    
    The entries of $C(T)$ and $C(T')$ differ exactly at the positions corresponding to heights of the left most nodes (internal nodes or leaves) of $T$ in each row of the triangle labeled $c$ in~\Cref{fig_proof_Loday_Ceballos}. These are exactly the heights from $i=a+b$ to $i=a+b+c-1$. For all these heights, the area of the path~$T_i$ increases by the red area illustrated in the figure, which is equal to $bd$. Therefore,
    \begin{equation}\label{eq_proof_Ceballos}
     c_i' =
    \begin{cases}
         c_i+bd, & \text{if } a+b \leq i \leq a+b+c-1 \\
         c_i, & \text{otherwise.}
    \end{cases}                
    \end{equation}

    Comparing Equations~\eqref{eq_proof_Loday} and~\eqref{eq_proof_Ceballos}, we deduce that $\widetilde c_i'-\widetilde c_i=c_i'-c_i$. And so,
    \begin{align}
        \varphi(L(T'))-\varphi(L(T))=C(T')-C(T)
    \end{align}
    as desired.  
\end{proof}

\section{Enumerative properties}
The purpose of this section is to present some enumerative properties about the $h$-vector of the alt~$\nu$-Tamari complex and the $f$-vector of the alt $\nu$-associahedron~$\altAsso{\nu}{\delta}$. Interestingly, for a fixed $\nu$, these two vectors are independent of the choice of $\delta$. We will see some relations to $\nu$-Schröder paths and the $\nu$-Narayana numbers. 

\subsection{The $h$-vector of the alt $\nu$-Tamari complex}
Let \(\Delta\) be a \(d\)-dimensional simplicial complex. The \(f\)-vector of \(\Delta\) is
\[
f(\Delta)=(f_{-1},f_0,\dots,f_{d}),
\]
where \(f_i\) denotes the number of \(i\)-dimensional faces of \(\Delta\), and \(f_{-1}=1\).

The \(h\)-vector of \(\Delta\) is
\[
h(\Delta)=(h_0,h_1,\dots,h_{d+1}),
\]
defined by the relation
\[
\sum_{i=0}^{d+1} h_i t^{d+1-i}
=
\sum_{i=0}^{d+1} f_{i-1}(t-1)^{d+1-i}.
\]
    \begin{theorem}
    \label{thm_hvector}
        For a fixed $\nu$, all alt $\nu$-Tamari complexes $\altTamComplex{\nu}{\delta}$ have the same~\mbox{$h$-vector}. 
        \begin{enumerate}
            \item $h_i$ is equal to the number of $\nu$-paths with exactly $i$ valleys.
            \item $h_i$ is equal to the number of elements of $\altTam{\nu}{\delta}$ with exactly~$i$ upper covers.
        \end{enumerate}
        Both quantities are independent of $\delta$.
        The numbers $h_0,h_1,\dots$ are called the \defn{$\nu$-Narayana numbers}.
    \end{theorem}

\begin{proof}
    Let $\delta,\delta'$ be two increment vectors for a fixed path $\nu$.  As shown in~\Cref{lem_changing_U}, the polytopes~$\subpolytope{U_{\delta,\nu}}$ and $\subpolytope{U_{\delta',\nu}}$ are integrally equivalent. Furthermore, $\altTamComplex{\nu}{\delta}$ and $\altTamComplex{\nu}{\delta'}$ give two unimodular triangulations of these two polytopes. Since all unimodular triangulations of a polytope have the same $h$-vector, and this property is preserved under integrally equivalence, then all alt $\nu$-Tamari complexes must have the same $h$-vector. Therefore, it is sufficient to prove the result for one particular case of $\delta$. Taking the extreme case $\delta_i=\nu_i$ recovers the $\nu$-Tamari complex, for which this result is known~\cite[Theorem~4.6]{ceballos_geometry_2019}.  
\end{proof} 

\begin{remark}
    The $\nu$-Narayana numbers in~\Cref{thm_hvector} were shown to be the entries of the $h$-vector of the $\nu$-Tamari complex in~\cite{ceballos_geometry_2019}. They proved this by showing that any linear extension of the $\nu$-Tamari lattice gives a shelling order of the facets of the $\nu$-Tamari complex. Using the same techniques one can show that this results also holds for the alt $\nu$-Tamari lattice.  
\end{remark}

The following proposition allows us to count the number of interior faces of the alt $\nu$-Tamari complex in terms of its $h$-vector. 
\begin{proposition}\label{prop_hpolynomial}
    The $h$-polynomial $h(t)= \sum_{i=0}^{d+1} h_i t^i$ of the all alt $\nu$-Tamari complex $\altTamComplex{\nu}{\delta}$ satisfies
    \begin{equation}\label{eq_h_evaluation_interior_faces}
        h(t+1) =
        \sum_{i=0}^{d} f_{d-i}^{int} t^{i},
    \end{equation}
    where $f_i^{int}$ is the number of $i$-dimensional interior faces of $\altTamComplex{\nu}{\delta}$. 
\end{proposition}

\begin{proof}
    By~\cite[Corollary~3.2]{CeballosMuhle2022}, we obtain 
    \begin{equation}
        t^{d+1}h\left( \frac{t+1}{t}\right) =
        \sum_{i=1}^{d+1} f_{i-1}^{int} t^i.
    \end{equation}
    Evaluating at $\frac{1}{t}$ and then multiplying by $t^{d+1}$ yields 
    \begin{equation}
        h(t+1) =
        \sum_{i=1}^{d+1} f_{i-1}^{int} t^{d+1-i}
        =
        \sum_{i=0}^{d} f_{i}^{int} t^{d-i}
        =
        \sum_{i=0}^{d} f_{d-i}^{int} t^{i}.
    \end{equation}
\end{proof}

\begin{corollary}\label{cor_interior_faces}
    The number $f_{d-i}^{int}$ of codimension $i$ interior faces of $\altTamComplex{\nu}{\delta}$ is equal to the number of pairs $(\mu,V)$ such that $\mu$ is a $\nu$-path and $V$ is a subset of valleys of $\mu$, with $|V|=i$. 
\end{corollary}

\begin{proof}
    By~\Cref{thm_hvector}, the $h$-polynomial of $\altTamComplex{\nu}{\delta}$ is
    \[
    h(t)=\sum_{\mu} t^{\operatorname{valleys}(\mu)}
    \]
    where the sum runs over all $\nu$-paths $\mu$ and $\operatorname{valleys}(\mu)$ is the number of valleys of $\mu$.
    Therefore, 
    \begin{align*}
    h(t+1) = \sum_{\mu} (t+1)^{\operatorname{valleys}(\mu)}     = \sum_{(\mu,V)} t^{|V|},
    \end{align*}
    where the sum runs over all pairs $(\mu,V)$ consisting of a $\nu$-path $\mu$ and a subset $V$ of valleys of $\mu$. 
    Combining this with~\Cref{prop_hpolynomial} yields the result.
\end{proof}

\subsection{The $f$-vector of the alt $\nu$-associahedron}
Let $\nu$ be a lattice path and $\delta$ be an increment vector with respect to $\nu$. The $f$-vector of the alt $\nu$-associahedron $\altAsso{\nu}{\delta}$ is the vector 
\[
f(\altAsso{\nu}{\delta})=
(f_{d}^{int},f_{d-1}^{int},f_{d-2}^{int},\dots),
\]
where $f_i(\altAsso{\nu}{\delta})=f_{d-i}^{int}$ is the number of codimension $i$ interior faces of $\altTamComplex{\nu}{\delta}$. 
In particular, the number of vertices is $f_{d}^{int}$, equal to the number of $\nu$-paths, which is independent of the choice of $\delta$. The following result generalizes this property.  

\begin{proposition}\label{prop_f_altnuAsso}
    For a fixed $\nu$, 
    \begin{enumerate}
        \item all alt $\nu$-associahedra $\altAsso{\nu}{\delta}$ have the same~\mbox{$f$-vector}. 
        \item $f_i(\altAsso{\nu}{\delta})=$ number of pairs $(\mu,V)$ such that $\mu$ is a $\nu$-path and $V$ is a subset of valleys of $\mu$, with $|V|=i$.
    \end{enumerate}
\end{proposition}

\begin{proof}
    By~\Cref{prop_hpolynomial}, the vector $(f_{d}^{int},f_{d-1}^{int},f_{d-2}^{int},\dots)$ is completely determined by the $h$-vector of $\altTamComplex{\nu}{\delta}$, which is independent of $\delta$ by~\Cref{thm_hvector}. This proves part (1). Since $f_i(\altAsso{\nu}{\delta})=f_{d-i}^{int}$,  Part (2) follows from~\Cref{cor_interior_faces}.
\end{proof}

This result leads to a nice  description of the faces and the $f$-vector of the alt $\nu$-associahedron $\altAsso{\nu}{\delta}$ in terms of $\nu$-Schröder paths. Our description generalizes the results in~\cite{vonbell_schroder_2021}.  
    
A \defn{$\nu$-Schröder path} is a lattice path using north steps~$(0,1)$, east steps~$(1,0)$, and diagonal steps~$(1,1)$, such that it stays weakly above $\nu$ and has the same initial and final points as $\nu$.  

\begin{theorem}\label{thm_schroderpaths}
    The $i$-dimensional faces of the alt $\nu$-associahedron $\altAsso{\nu}{\delta}$ are in bijection with the $\nu$-Schröder paths using $i$ diagonal steps.   
\end{theorem}

\begin{proof}
    By~\Cref{prop_f_altnuAsso}, the number of $i$-dimensional faces of $\altAsso{\nu}{\delta}$ is
    \[
    f_i(\altAsso{\nu}{\delta})= |\{
    (\mu,V):\, \text{$\mu$ is a $\nu$-path, $V$ is a subset of valleys of $\mu$}
    \}|
    \]
    Each such a pair $(\mu,V)$ can be thought as a $\nu$-path $\mu$ with some marked valleys $V$. Replacing each~$NE$ at a marked valley by a diagonal step gives a $\nu$-Schröder path. Viceversa, each $\nu$-Schröder path can be obtained this way by a unique pair $(\mu,V)$.  
\end{proof}

This result can be interpreted geometrically as follows. Each face of the alt $\nu$ associahedron is determined by
\begin{itemize}
    \item its minimal element: a $\nu$-path $\mu$, and
    \item the edges incident to the minimal element in that face: a subset $V$ of valleys of $\mu$, describing the rotation along those edges.
\end{itemize} 

So, we can label each face of $\altAsso{\nu}{\delta}$ by pairs $(\mu,V)$ such that $\mu$ is a $\nu$-path and $V$ is a subset of valleys of $\mu$. The dimension of the face corresponding to the pair $(\mu,V)$ is then equal to $|V|$.
This is illustrated for an example on the top of~\Cref{fig_interior_faces_schroederpaths}. 
The pairs $(\mu,V)$ can be though of as $\nu$-paths with some marked valleys. 
We obtain a labeling of the faces of $\altAsso{\nu}{\delta}$ by $\nu$-Schröder paths by replacing $NE$ at each marked valley by a diagonal step. This is illustrated on the bottom of~\Cref{fig_interior_faces_schroederpaths}.
The dimension of a face is then equal to the number of diagonal steps of the corresponding $\nu$-Schröder path.

\begin{figure}[htb]
    \begin{center}
        \input{figures/interior_faces_schroederpaths1}
        
        \vspace{3mm}
        \input{figures/interior_faces_schroederpaths2}
    \end{center}
    \caption{Faces of the alt $\nu$-associahedron $\altAsso{\nu}{\delta}$ labeled by $\nu$-Schröder paths, for~$\nu=ENEEN=(1,2,0)$ and $\delta=(1,0)$. 
    }
    \label{fig_interior_faces_schroederpaths}
\end{figure}

\section*{Acknowledgment}
I am very grateful to Matias von Bell, Cl{\'e}ment Chenevi{\`e}re, Sergio Fernandez de soto, Matthias Müller, Vincent Pilaud, and Yannic Vargas  for helpful conversations on related topics. 

\newpage
\bibliographystyle{plain}
\bibliography{biblio}

\newpage
\appendix

\section{Some 3D-examples}\label{sec_3D_examples}
In this appendix we show several examples of alt $\nu$-associahedra using our canonical coordinates. 

Figure~\ref{fig_some_3D_nuAssociahedra} shows four examples of $\nu$-associahedra, while Figure~\ref{fig_some_3D_alt_nuAssociahedra} shows all alt $\nu$-associahedra for $\nu=ENENEN=(1,1,1,0)$.
The examples of these two figures are shown in more detail in the remaining figures, where the vertices are labeled by points, coordinates, paths, and trees. 


\begin{figure}[h]
    \centering
    \input{figures/some_3D_nuAssociahedra}
    \caption{Some 3D $\nu$-associahedra.}
    \label{fig_some_3D_nuAssociahedra}
\end{figure}


\begin{figure}[h]
    \centering
    \input{figures/some_3D_alt_nuAssociahedra}
    \caption{All alt $\nu$-associahedra for $\nu=ENENEN=(1,1,1,0)$.}
    \label{fig_some_3D_alt_nuAssociahedra}
\end{figure}


\begin{figure}[h]
    \centering
    \input{figures/nu_associahedron_1110_all}
    \caption{The $ENENEN$-associahedron with vertices labeled by points, coordinates, paths, and trees.}
    \label{fig_nu_associahedron_1110_all}
\end{figure}


\begin{figure}[h]
    \centering
    \input{figures/nu_associahedron_1120_all}
    \caption{The $ENENEEN$-associahedron with vertices labeled by points, coordinates, paths, and trees.}
    \label{fig_nu_associahedron_1120_all}
\end{figure}


\begin{figure}[htb]
    \centering
    \begin{tabular}{c}
        \input{figures/nu_associahedron_1220}
         \\[10mm] 
        \input{figures/nu_associahedron_1220_coordinates}
    \end{tabular}
    \caption{The $ENEENEEN$-associahedron with vertices labeled by points and coordinates.}
    \label{fig:enter-label}
\end{figure}

\begin{figure}[htb]
    \centering
    \begin{tabular}{c}
        \input{figures/nu_associahedron_1220_paths}
         \\[10mm]
        \input{figures/nu_associahedron_1220_trees}        
    \end{tabular}
    \caption{The $ENEENEEN$-associahedron with vertices labeled by paths and trees.}
    \label{fig:enter-label}
\end{figure}


\begin{figure}[htb]
    \centering
    \begin{tabular}{c}
        \input{figures/nu_associahedron_2110}
         \\[10mm] 
        \input{figures/nu_associahedron_2110_coordinates}
    \end{tabular}
    \caption{The $EENENEN$-associahedron with vertices labeled by points and coordinates.}
    \label{fig:enter-label}
\end{figure}

\begin{figure}[htb]
    \centering
    \begin{tabular}{c}
        \input{figures/nu_associahedron_2110_paths}
         \\[10mm]
        \input{figures/nu_associahedron_2110_trees}        
    \end{tabular}
    \caption{The $EENENEN$-associahedron with vertices labeled by paths and trees.}
    \label{fig:enter-label}
\end{figure}


\begin{figure}[h]
    \centering
    \input{figures/alt_nu_asso_1110_000_all}
    \caption{The alt $\nu$-associahedron with vertices labeled by points, coordinates, paths, and trees, for $\nu=ENENEN=(1,1,1,0)$ and $\delta=(0,0,0)$.}
    \label{fig_alt_nu_asso_1110_000_all}
\end{figure}


\begin{figure}[h]
    \centering
    \input{figures/alt_nu_asso_1110_100_all}
    \caption{The alt $\nu$-associahedron with vertices labeled by points, coordinates, paths, and trees, for $\nu=ENENEN=(1,1,1,0)$ and $\delta=(1,0,0)$.}
    \label{fig_alt_nu_asso_1110_100_all}
\end{figure}


\begin{figure}[h]
    \centering
    \input{figures/alt_nu_asso_1110_010_all}
    \caption{The alt $\nu$-associahedron with vertices labeled by points, coordinates, paths, and trees, for $\nu=ENENEN=(1,1,1,0)$ and $\delta=(0,1,0)$.}
    \label{fig_alt_nu_asso_1110_010_all}
\end{figure}

\end{document}